\documentclass[11pt, a4paper,reqno,oneside]{amsart}
\usepackage[a4paper, total={6in, 9in}]{geometry}
\usepackage{subscript}
\usepackage{listings}
\usepackage[T1]{fontenc}

\usepackage{float}
\usepackage{amsfonts}
\usepackage{mathtools}
\mathtoolsset{showonlyrefs}
\numberwithin{equation}{section}
\usepackage{mathrsfs}  
\usepackage{amsthm}
\usepackage{array, boldline, makecell, booktabs}
\usepackage[final, pdftex, pdfpagelabels, pdfstartview={FitH},
            bookmarks, plainpages=false, linktoc=all,
            linkcolor=black, citecolor=blue, urlcolor=blue,
            filecolor=black]{hyperref}
\usepackage{cleveref}
\usepackage{amssymb}
\usepackage[USenglish]{babel}
\usepackage{csquotes}

\usepackage{enumerate}
\usepackage{xcolor}
\usepackage{fvextra}
\DefineVerbatimEnvironment{wrapverbatim}{Verbatim}{breaklines=true}

\usepackage{dynkin-diagrams}
\usepackage{standalone}

\usepackage{comment}

\usepackage{tikz}
\usetikzlibrary{calc}
\usepackage{subcaption}
\usepackage[title]{appendix}
\usepackage{oldgerm}
\usepackage{multicol}

\def\Z{\mathbb{Z}}

\usepackage[textsize=tiny]{todonotes}
\setlength{\marginparwidth}{2cm}

\usepackage{minted}
\setminted{breaklines=true}

\usepackage{xparse}

\NewDocumentCommand{\NewTodoAuthor}{mmm}{%
  \expandafter\NewDocumentCommand\csname #1\endcsname{s m}{%
    \IfBooleanTF{##1}
      {\todo[inline,author=#2,color=#3]{##2}}%
      {\todo[author=#2,color=#3]{##2}}%
  }%
}

\NewTodoAuthor{JE}{JE}{cyan!20}
\NewTodoAuthor{NL}{NL}{orange!25}
\NewTodoAuthor{DP}{DP}{blue!20}
\NewTodoAuthor{JS}{JS}{green!25}
\NewTodoAuthor{AW}{AW}{violet!20}
\NewTodoAuthor{GW}{GW}{red!20}

\theoremstyle{plain}
\newtheorem{theorem}{Theorem}[section]
\newtheorem{lemma}[theorem]{Lemma}
\newtheorem{prop}[theorem]{Proposition}
\newtheorem{cor}[theorem]{Corollary}

\theoremstyle{definition}
\newtheorem{definition}[theorem]{Definition}
\newtheorem{remark}[theorem]{Remark}
\newtheorem{example}[theorem]{Example}

\def\C{\mathbb{C}}
\def\F{\mathbb{F}}

\newcommand{\dwd}[1]{\mathcal{D}_{#1}}
\newcommand{\comI}[1]{\mathcal{C}_{#1}}

\newcommand{\DrawGridAndPoints}[3]{%
  \begin{tikzpicture}[x=0.28cm,y=0.28cm,line cap=round]
    \def\n{16} 

    \draw[step=1,very thin,gray!40] (0,0) grid (\n,\n);
    \draw[line width=0.9pt] (0,0) rectangle (\n,\n);

    \foreach \x in {0,#1,...,\n} {\draw[line width=1.1pt] (\x,0) -- (\x,\n);}
    \foreach \y in {0,#2,...,\n} {\draw[line width=1.1pt] (0,\y) -- (\n,\y);}

    \foreach \x in {0,...,15} {
      \node[font=\tiny, anchor=south] at (\x+0.5,\n) { \x };
    }
    \foreach \y in {0,...,15} {
      \pgfmathtruncatemacro{\lab}{15-\y}
      \node[font=\tiny, anchor=east] at (0,\y+0.5) { \lab};
    }

    \foreach \i/\val in {
      0/0, 1/8, 2/4, 3/12, 4/2, 5/10, 6/6, 7/14,
      8/1, 9/9, 10/5, 11/13, 12/3, 13/11, 14/7, 15/15
    }{
      \fill[red] (\i+0.5,\val+0.5) circle (1.6pt);
    }
    \foreach \i/\val in {
      0/15, 1/7, 2/11, 3/3, 4/13, 5/5, 6/9, 7/1,
      8/14, 9/6, 10/10, 11/2, 12/12, 13/4, 14/8, 15/0
    }{
      \fill[blue] (\i+0.5,\val+0.5) circle (1.6pt);
    }

    \node[font=\footnotesize, anchor=north] at (\n/2,-1.2) {#3};
  \end{tikzpicture}%
}



\title{Bruhat intervals that are large hypercubes}

\author{Jordan Ellenberg}
\address{Department of Mathematics, University of Wisconsin-Madison, Madison, WI USA}
\email{ellenber@math.wisc.edu}

\author{Nicolas Libedinsky}
\address{Departamento de Matemáticas, Universidad de Chile, Chile}
\email{nlibedinsky@gmail.com}

\author{David Plaza}
\address{Instituto de Matemáticas, Universidad de Talca, Chile}
\email{dplaza@utalca.cl}

\author{Jos\'e Simental}
\address{Instituto de Matemáticas, Universidad Nacional Autónoma de México. México}
\email{simental@im.unam.mx}

\author{Geordie Williamson}
\address{School of Mathematics and Statistics, University of Sydney, Australia}
\email{g.williamson@sydney.edu.au}

\date{\today}

\begin{document}

\begin{abstract}

We study the question of finding big Bruhat intervals that are poset hypercubes in the symmetric group $S_n$.
Using permutations suggested by AlphaEvolve (an evolutionary coding agent developed by Google DeepMind), we were led to an unusual situation in which the agent produced a pattern which performed well for the $n$ tested, and which we show works well for general $n$.
When $n$ is a power of 2 we exhibit a hypercube of dimension $O(n\log n)$, matching the largest possible dimension up to a constant multiple.
Furthermore,  we give an exact characterization of the vertices of this hypercube: they are precisely the \emph{dyadically well-distributed} permutations—a simple digitwise property that already appeared in connection with Monte Carlo integration and mathematical finance. 
The maximal dimension of a  Bruhat interval that is an hypercube in $S_n$ gives a lower bound (and possibly is equal to)  the maximal possible coefficient of the second-highest degree term in the 
 Kazhdan--Lusztig $R$-polynomial in $S_n$. 
As a surprising consequence, we obtain a new lower bound of order $n\log n$ for the maximal number of frozen variables appearing in the cluster algebras attached to the open Richardson varieties in $S_n$, and a  similar result for moduli spaces of embeddings of Bruhat graphs. 

\end{abstract}

\maketitle

\section{Introduction}

Consider the symmetric group $S_n$ equipped with the Bruhat order.
We study the question of how large a hypercube (i.e.\ boolean) interval can
occur inside $S_n$. 
To the best of our knowledge, this problem was first posed in Reading's
thesis \cite{reading2002structure}, which established lower bounds for all
finite irreducible Coxeter groups. 
In the specific case of $S_n$, he showed that the largest hypercube has at
least rank
\begin{equation}
    n-1 + \left\lfloor \frac{n-2}{2} \right\rfloor .
\end{equation}
Further works that revolve around this problem include
\cite{tenner2007pattern, hultman2009pattern, tenner2022interval, lee2021toric}.

In this paper, we find an interval in $S_n$ which is poset isomorphic to a hypercube whose dimension is within a constant mutliple of the maximal possible size.

More precisely, our main theorem is the following (see \S~\ref{sec:dwd} for undefined terms):

\begin{theorem}
    Consider the symmetric group on $2^m$ elements, for $m \ge 1$. The set of dyadically well distributed permutations forms an interval in Bruhat order. This interval is poset isomorphic to a hypercube of dimension $m2^{m-1}$.
\end{theorem}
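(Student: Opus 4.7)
The plan is to leverage a recursive decomposition of DWD permutations. Identifying $\{0,\ldots,2^m-1\}$ with $\{0,1\}^m$ via binary expansion, I would reformulate the DWD condition: for each $k \in \{0,\ldots,m\}$, partitioning the domain into $2^k$ consecutive blocks of size $2^{m-k}$ and the codomain into $2^{m-k}$ blocks of size $2^k$, each index-block meets each value-block in exactly one point of the graph of $\sigma$. Applying this at $k=m-1$ shows that for every $i<2^{m-1}$, exactly one of $\sigma(2i),\sigma(2i+1)$ lies in the lower half $\{0,\ldots,2^{m-1}-1\}$. Hence a DWD permutation of size $2^m$ is parameterized by (a) bits $b_0,\ldots,b_{2^{m-1}-1}$ specifying which of each pair goes to the upper half, and (b) two DWD permutations $\tau_0,\tau_1$ of size $2^{m-1}$ recording the induced bijections onto each half. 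This gives $N(m)=2^{2^{m-1}}N(m-1)^2$ with $N(1)=2$, hence $N(m)=2^{m\cdot 2^{m-1}}$, matching the claimed hypercube dimension.

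Next I would exhibit candidate minimum $u_m$ and maximum $v_m$ of the DWD set via the same recursion: $u_m$ corresponds to all $b_i=0$ with $\tau_0=\tau_1=u_{m-1}$, and $v_m$ to all $b_i=1$ with $\tau_0=\tau_1=v_{m-1}$. (Sanity check for $m=2$: one obtains $u_2=1324$, $v_2=4231$ in one-line notation, with length gap $4=m\cdot 2^{m-1}$.) A direct inversion count tracking how many new inversions are introduced at each recursion layer should give $\ell(v_m)-\ell(u_m)=m\cdot 2^{m-1}$ in general.

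The core technical step is identifying the DWD set with the Bruhat interval $[u_m,v_m]$. For this I would use the tableau (Ehresmann) criterion—Bruhat order in $S_n$ is componentwise comparison of sorted initial segments of one-line notations—together with a parallel reformulation of DWD in terms of how initial segments distribute across dyadic value-blocks. The recursive decomposition should reduce both inclusions to the analogous statements for $m-1$: for DWD $\subseteq [u_m,v_m]$, one sandwiches the sorted initial segments of any DWD $\sigma$ between those of $u_m$ and $v_m$; for $[u_m,v_m]\subseteq$ DWD, one verifies that any single Bruhat covering step taken inside $[u_m,v_m]$ preserves the DWD condition, so the DWD set is Bruhat-convex.

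Given DWD $=[u_m,v_m]$, the hypercube structure follows. The recursive bijection DWD $\cong \{0,1\}^{2^{m-1}}\times\mathrm{DWD}(m-1)^2$ inductively identifies the set with $\{0,1\}^{m\cdot 2^{m-1}}$, and compatibility with Bruhat order reduces to checking that each bit-flip at each level corresponds to a single Bruhat covering relation (the top-level bits control adjacent-value swaps within each pair $(\sigma(2i),\sigma(2i+1))$, and the lower-level hypercube structure is supplied by induction). The main obstacle will be the convexity step in the third paragraph: verifying that every Bruhat covering transposition within $[u_m,v_m]$ preserves the rigid DWD condition requires a careful case analysis balancing the digitwise description of DWD against the sorting description of Bruhat order. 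The recursive decomposition makes the bookkeeping tractable, but this is where the bulk of the technical work lies.
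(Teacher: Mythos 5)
Your recursive decomposition of the dyadically well-distributed (DWD) condition is correct, and it is essentially a re-indexing of the paper's construction: your bits at each level of the recursion correspond exactly to the paper's ``complementary blocks'' $(S,T)$ with $k_1+k_2=m+1$, your count $2^{m2^{m-1}}$ matches $|\comI{m}|=m2^{m-1}$, and your $u_m,v_m$ coincide with the paper's bit-reversal permutations $x_m,y_m$. So the skeleton is sound. However, as written the argument has two genuine gaps. First, the poset isomorphism: checking that each upward bit-flip is a Bruhat covering relation only shows that the hypercube order is \emph{contained} in the Bruhat order on the DWD set (product order $\Rightarrow$ Bruhat order). To get an isomorphism you must also rule out extra Bruhat relations between DWD permutations that are invisible in the bit parameterization, i.e.\ prove the order-reflecting direction; a rank-preserving order-preserving bijection between graded posets with equal rank sizes need not be an isomorphism. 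The paper handles this with a separate argument (its Lemma~\ref{lem:order-preserving}): if two DWD permutations disagree on one complementary block in the ``wrong'' direction, the Ehresmann entry at the midpoint of that block strictly decreases, contradicting Bruhat comparability. Your proposal never addresses this direction.

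Second, your division of labor between the two inclusions is inverted, and both are left as ``should''s. The inclusion $[u_m,v_m]\subseteq\mathrm{DWD}$, which you flag as the bulk of the technical work via a cover-by-cover convexity analysis, is in fact the easy part: since $u_m$ and $v_m$ are both DWD, their Ehresmann matrices agree at every corner of a fundamental block, so for any $\pi\in[u_m,v_m]$ the squeeze $E_{u_m}\le E_\pi\le E_{v_m}$ forces equality at those corners, and inclusion--exclusion gives exactly one $1$ per fundamental block — no induction or case analysis on covers is needed (and your cover-by-cover route would likely have to rediscover this squeeze anyway to show that a DWD-breaking cover exits the interval). Conversely, the inclusion $\mathrm{DWD}\subseteq[u_m,v_m]$, which you treat as a routine sandwich of sorted initial segments, is where the real content lies: the DWD condition pins down $E_\sigma$ only at block corners, and at general cells the bound $E_{u_m}\le E_\sigma\le E_{v_m}$ does not follow formally from the recursion, because the boundary-column corrections depend on individual values of $\tau_0,\tau_1$, which Bruhat comparability does not control pointwise. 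The paper avoids this by instead exhibiting, for any DWD $\sigma$, an explicit chain of length-increasing transpositions (``flips,'' one per complementary block) from $x_m$ up to $\sigma$ and from $\sigma$ up to $y_m$; some such mechanism — or a genuinely worked-out Ehresmann estimate at arbitrary cells — is needed to close your argument.
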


By Stirling's formula, any hypercube inside $S_{2^m}$ is of dimension bounded by $m2^m$.  Thus the dimension of our hypercube is within a constant factor of largest possible.

The interval referred to in Theorem~\ref{thm: main} consists of permutations which are ``dyadically well distributed,'' a number-theoretic condition related to binary expansion. We remark that the definition of dyadically well distributed permutations does not involve any Lie theoretic data so the fact that this set is indeed a Bruhat interval is quite striking. 
To the best of the authors' knowledge this is the first instance this phenomenon occurs in a non-trivial way.

\subsection{Background}
 The existence of natural and large hypercubes in Bruhat order is surprising. Anyone who experiments with Bruhat order will notice that the majority of small intervals are small hypercubes. It is also easy to see that the interval between the identity and any Coxeter element (for example, $s_1s_2...s_{n-1}$, where $s_i$ denotes a simple transposition) is a hypercube of dimension $n-1$. It is perhaps suprising that there exist hypercubes of dimension larger than $n-1$ (see Figure \ref{fig: S4 hypercube}). Moreover, as the intervals become larger it becomes harder and harder to find hypercubes (see Table \ref{fig: ratio Hyp}). This might lead one to guess that there are no hypercubes of superlinear dimension in Bruhat order. Our main result shows that the intuition gained from these small examples is far from the reality.

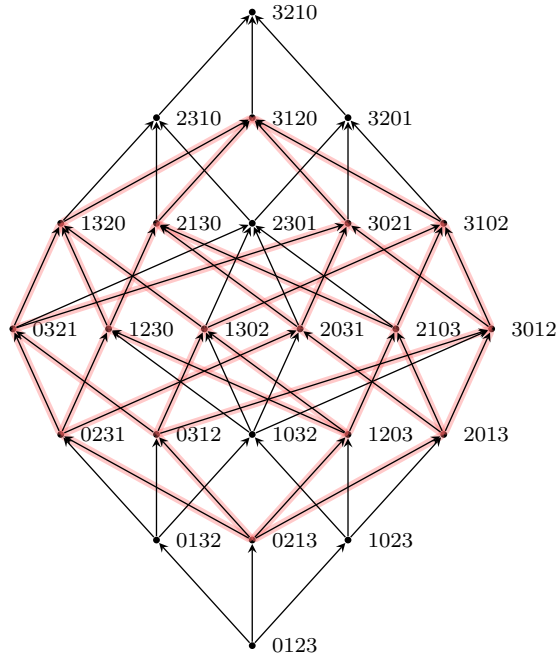
\begin{figure}[htbp]
    \centering
\begin{tikzpicture}[
    scale=.7,
    >=stealth,
    dot/.style={circle,fill=black,inner sep=0.9pt},
    every node/.style={font=\scriptsize},
    intervaledge/.style={line width=3pt, draw=red!60, opacity=0.35, line cap=round},
    arrow/.style={->, line width=0.5pt}
]


\node[dot] (v0123) at (0,0) {};
\node[anchor=west] at (0.18,0) {$0 1 2 3$};

\node[dot] (v0132) at (-1.8,2) {};
\node[anchor=west] at (-1.62,2) {$0 1 3 2$};

\node[dot] (v0213) at (0,2) {};
\node[anchor=west] at (0.18,2) {$0 2 1 3$};

\node[dot] (v1023) at (1.8,2) {};
\node[anchor=west] at (1.98,2) {$1 0 2 3$};

\node[dot] (v0231) at (-3.6,4) {};
\node[anchor=west] at (-3.42,4) {$0 2 3 1$};

\node[dot] (v0312) at (-1.8,4) {};
\node[anchor=west] at (-1.62,4) {$0 3 1 2$};

\node[dot] (v1032) at (0,4) {};
\node[anchor=west] at (0.18,4) {$1 0 3 2$};

\node[dot] (v1203) at (1.8,4) {};
\node[anchor=west] at (1.98,4) {$1 2 0 3$};

\node[dot] (v2013) at (3.6,4) {};
\node[anchor=west] at (3.78,4) {$2 0 1 3$};

\node[dot] (v0321) at (-4.5,6) {};
\node[anchor=west] at (-4.32,6) {$0 3 2 1$};

\node[dot] (v1230) at (-2.7,6) {};
\node[anchor=west] at (-2.52,6) {$1 2 3 0$};

\node[dot] (v1302) at (-0.9,6) {};
\node[anchor=west] at (-0.72,6) {$1 3 0 2$};

\node[dot] (v2031) at (0.9,6) {};
\node[anchor=west] at (1.08,6) {$2 0 3 1$};

\node[dot] (v2103) at (2.7,6) {};
\node[anchor=west] at (2.88,6) {$2 1 0 3$};

\node[dot] (v3012) at (4.5,6) {};
\node[anchor=west] at (4.68,6) {$3 0 1 2$};

\node[dot] (v1320) at (-3.6,8) {};
\node[anchor=west] at (-3.42,8) {$1320$};

\node[dot] (v2130) at (-1.8,8) {};
\node[anchor=west] at (-1.62,8) {$2 1 3 0$};

\node[dot] (v2301) at (0,8) {};
\node[anchor=west] at (0.18,8) {$2 3 0 1$};

\node[dot] (v3021) at (1.8,8) {};
\node[anchor=west] at (1.98,8) {$3 0 2 1$};

\node[dot] (v3102) at (3.6,8) {};
\node[anchor=west] at (3.78,8) {$3 1 0 2$};

\node[dot] (v2310) at (-1.8,10) {};
\node[anchor=west] at (-1.62,10) {$2 3 1 0$};

\node[dot] (v3120) at (0,10) {};
\node[anchor=west] at (0.18,10) {$3 1 2 0$};

\node[dot] (v3201) at (1.8,10) {};
\node[anchor=west] at (1.98,10) {$3 2 0 1$};

\node[dot] (v3210) at (0,12) {};
\node[anchor=west] at (0.18,12) {$3 2 1 0$};


\draw[intervaledge] (v0213) -- (v0231);
\draw[intervaledge] (v0213) -- (v0312);
\draw[intervaledge] (v0213) -- (v1203);
\draw[intervaledge] (v0213) -- (v2013);

\draw[intervaledge] (v0231) -- (v0321);
\draw[intervaledge] (v0231) -- (v1230);
\draw[intervaledge] (v0231) -- (v2031);

\draw[intervaledge] (v0312) -- (v0321);
\draw[intervaledge] (v0312) -- (v1302);
\draw[intervaledge] (v0312) -- (v3012);

\draw[intervaledge] (v1203) -- (v1230);
\draw[intervaledge] (v1203) -- (v1302);
\draw[intervaledge] (v1203) -- (v2103);

\draw[intervaledge] (v2013) -- (v2031);
\draw[intervaledge] (v2013) -- (v2103);
\draw[intervaledge] (v2013) -- (v3012);

\draw[intervaledge] (v0321) -- (v1320);
\draw[intervaledge] (v0321) -- (v3021);

\draw[intervaledge] (v1230) -- (v1320);
\draw[intervaledge] (v1230) -- (v2130);

\draw[intervaledge] (v1302) -- (v1320);
\draw[intervaledge] (v1302) -- (v3102);

\draw[intervaledge] (v2031) -- (v2130);
\draw[intervaledge] (v2031) -- (v3021);

\draw[intervaledge] (v2103) -- (v2130);
\draw[intervaledge] (v2103) -- (v3102);

\draw[intervaledge] (v3012) -- (v3021);
\draw[intervaledge] (v3012) -- (v3102);

\draw[intervaledge] (v1320) -- (v3120);
\draw[intervaledge] (v2130) -- (v3120);
\draw[intervaledge] (v3021) -- (v3120);
\draw[intervaledge] (v3102) -- (v3120);


\draw[arrow] (v0123) -- (v0132);
\draw[arrow] (v0123) -- (v0213);
\draw[arrow] (v0123) -- (v1023);

\draw[arrow] (v0132) -- (v0231);
\draw[arrow] (v0132) -- (v0312);
\draw[arrow] (v0132) -- (v1032);

\draw[arrow] (v0213) -- (v0231);
\draw[arrow] (v0213) -- (v0312);
\draw[arrow] (v0213) -- (v1203);
\draw[arrow] (v0213) -- (v2013);

\draw[arrow] (v1023) -- (v1032);
\draw[arrow] (v1023) -- (v1203);
\draw[arrow] (v1023) -- (v2013);

\draw[arrow] (v0231) -- (v0321);
\draw[arrow] (v0231) -- (v1230);
\draw[arrow] (v0231) -- (v2031);

\draw[arrow] (v0312) -- (v0321);
\draw[arrow] (v0312) -- (v1302);
\draw[arrow] (v0312) -- (v3012);

\draw[arrow] (v1032) -- (v1230);
\draw[arrow] (v1032) -- (v1302);
\draw[arrow] (v1032) -- (v2031);
\draw[arrow] (v1032) -- (v3012);

\draw[arrow] (v1203) -- (v1230);
\draw[arrow] (v1203) -- (v1302);
\draw[arrow] (v1203) -- (v2103);

\draw[arrow] (v2013) -- (v2031);
\draw[arrow] (v2013) -- (v2103);
\draw[arrow] (v2013) -- (v3012);

\draw[arrow] (v0321) -- (v1320);
\draw[arrow] (v0321) -- (v2301);
\draw[arrow] (v0321) -- (v3021);

\draw[arrow] (v1230) -- (v1320);
\draw[arrow] (v1230) -- (v2130);

\draw[arrow] (v1302) -- (v1320);
\draw[arrow] (v1302) -- (v2301);
\draw[arrow] (v1302) -- (v3102);

\draw[arrow] (v2031) -- (v2130);
\draw[arrow] (v2031) -- (v2301);
\draw[arrow] (v2031) -- (v3021);

\draw[arrow] (v2103) -- (v2130);
\draw[arrow] (v2103) -- (v2301);
\draw[arrow] (v2103) -- (v3102);

\draw[arrow] (v3012) -- (v3021);
\draw[arrow] (v3012) -- (v3102);

\draw[arrow] (v1320) -- (v2310);
\draw[arrow] (v1320) -- (v3120);

\draw[arrow] (v2130) -- (v2310);
\draw[arrow] (v2130) -- (v3120);

\draw[arrow] (v2301) -- (v2310);
\draw[arrow] (v2301) -- (v3201);

\draw[arrow] (v3021) -- (v3120);
\draw[arrow] (v3021) -- (v3201);

\draw[arrow] (v3102) -- (v3120);
\draw[arrow] (v3102) -- (v3201);

\draw[arrow] (v2310) -- (v3210);
\draw[arrow] (v3120) -- (v3210);
\draw[arrow] (v3201) -- (v3210);

\end{tikzpicture}

    \caption{The first ``unexpected" hypercube in $S_4$ of dimension $4$. This hypercube is also the first interesting example of our construction.}
    \label{fig: S4 hypercube}
\end{figure}

A similar phenomenon occurs in cluster algebras. It was long conjectured, and recently proved, that open Richardson varieties attached to pairs of permutations admit cluster structures \cite{CGGLSS, GLSBII}. A crucial invariant of a cluster variety is its number of frozen variables, which is roughly the maximal possible rank of a faithful torus action. 
Initially, it was thought that this number could be at most $n-1$ in $S_n$, as this is the dimension of the naturally acting maximal torus in $\mathrm{SL}(n)$. But one can find open Richardson varieties for $S_n$ that are tori of rank greater than $n-1$. The realization that this number of frozen variables is mysterious provided an important step in finding the cluster structure. After cluster structures on open Richardson varieties had been constructed, it was realised that the number of frozen variables is equal to the Kazhdan-Lusztig $d$-invariant (see \S~\ref{sec:R polynomials}), which is readily computable, and easily seen to occasionally exceed $n-1$.  The intervals of this paper provide examples where the number of frozen variables is $n \log n$, greatly exceeding any known family of examples.

A related phenomenon occurs in studying the combinatorial invariance conjecture of Dyer and Lusztig. Here the problem is to prove that isomorphic Bruhat intervals give rise to equal Kazhdan-Lusztig polynomials. Any Bruhat interval has a natural embedding inside $\mathfrak{h}^*$, the dual of the Lie algebra of a maximal torus. It is well known that one can recover the Kazhdan-Lusztig polynomial from this embedded graph. Thus, it is natural to try to prove the conjecture by relating embeddings associated to different incarnations of isomorphic intervals. Surprisingly, geometric embeddings of isomorphic intervals can have greatly differing dimensions. For example, hypercubes may appear embedded in  a space of dimension much lower than their ``natural" dimension. In an attempt to overcome this difficulty, Hone, Klein and the last author introduced a notion of a non-degenerate embedding, and study their moduli. In striking similarity to the cluster algebra story, the dimension in which this embedding lives is difficult to compute, appears related to the $d$-invariant, and may be considerably larger than $n-1$ (the dimension of $\mathfrak{h}^*$). The examples of this paper provide intervals where this ``natural dimension'' is much higher than naively expected.

A related motivation for the consideration of hypercubes in Bruhat order is again related to the combinatorial invariance conjecture. Motivated by AI experiments,  Blundell, Buesing, Davies, Veličković and the last author conjectured a new formula for Kazhdan-Lusztig polynomials based on the Bruhat graph \cite{davies2021advancing, BBDVW}, which (if true) would imply combinatorial invariance. This work has led to renewed interest in combinatorial invariance \cite{BrentiMarietti,GurevichWang,BGLower} and a proof for elementary intervals \cite{BG}. The formula relies on the choice of ``hypercube decomposition" of the interval, which provides an iterative way to build Bruhat intervals from hypercubes. In \cite{BBDVW}, a simple construction proves that one can do this in $n-1$ steps for any interval in $S_n$, however experimentally one can often be much more efficient. This efficiency is related to the existence of unexpectedly large families of hypercubes.

\begin{table}[ht]
\centering
\begin{tabular}{|r |r| r| r|}
\hline
$k$ & hypercubes & total & ratio \\
\hline \hline
3  & 223704 & 241620 & 0.9259 \\ \hline
4  & 286108 & 387932 & 0.7375 \\\hline
5  & 231484 & 498176 & 0.4647 \\\hline
6  & 111064 & 536860 & 0.2069 \\\hline
7  & 27484  & 502031 & 0.0547 \\\hline
8  & 2736   & 417142 & 0.0066 \\\hline
9  & 64     & 313063 & 0.0002 \\\hline
10 & 0      & 214478 & 0.0000 \\\hline
11 & 0      & 134933 & 0.0000 \\\hline
12 & 0      & 78104  & 0.0000 \\\hline
\hline
\end{tabular}
 \caption{ Hypercubes of length $k$ / total intervals of length $k$ for $S_7$}
    \label{fig: ratio Hyp}
\end{table}

\subsection{Methodology}

The large hypercube that is the subject of this paper was discovered using a novel experimental methodology.  Namely: we searched for pairs of permutations in $S_n$, not by any form of exhaustive search (which would be impractical for any but the smallest $n$) but using a protocol called AlphaEvolve, recently developed by Google DeepMind.  This protocol is an evolutionary algorithm in which programs evolve in time, using a large language model as the mode of reproduction, with ``fitness" determined by the program's ability to generate output that meets a user-specified mathematical objective.  In this case, we were searching for pairs of permutations whose Kazhdan-Lusztig $d$-invariant was as large as possible. 

We will describe our experimental process in some detail in Section~\ref{sec:experiments}, in the hope that this description will be useful for other mathematicians interested in adopting these methods. 

There is already quite a bit of evidence that AlphaEvolve and related protocols can be used to generate examples of mathematical interest~(see e.g. \cite{funsearch, generative, AE}.)  Quite naturally, a great deal of the research so far has focused on problems that have already been the subject of substantial mathematical effort; in dozens of cases, researchers have been able to use AlphaEvolve to match or incrementally improve upon the best examples in the existing literature. Typically (though not universally) the incremental improvements are difficult to interpret; for instance, the large cap set in $(\Z/3\Z)^8$ found in \cite{funsearch} is larger than any yet discovered, but so far we have not been able to concoct a satisfying story of ``why" this cap set is so large, and it is possible that no such story exists.

The present project is somewhat different. The problem of finding large $d$-invariants, or large hypercubes, is one of natural interest, but there is not a large existing literature about it.  And the example we found using AlphaEvolve is qualitatively different from the best one we knew, having a larger asymptotic order of growth in $n$.  What's more, the permutations (more precisely, the program that generated the permutations) is highly interpretable; within a few days we were able to explain to ourselves why it worked, and thus to prove that the machine-generated examples for small $n$ in fact generalized to all $n$.  In particular, our intuition is that the large hypercube presented in this paper is an object of mathematical interest in its own right, and that there is more to learn about and from it.

\section*{Acknowledgements}  
This paper involves essential contributions from Adam Zsolt Wagner, who cannot be named as a coauthor for technical reasons.
This material is based upon work supported by the National Science Foundation under Grant No. DMS-1929284 while JE, NL, DP, JS and GW were in residence at the Institute for Computational and Experimental Research in Mathematics in Providence, RI, during the ``Categorification and Computation in Algebraic Combinatorics'' semester program. 
JE's work was also supported by NSF grant DMS-2301386 and a Simons Fellowship. 
NL was partially supported by FONDECYT-ANID grant 1230247.  
DP was partially supported by FONDECYT-ANID grant 1240199.
JS was partially supported by UNAM's PAPIIT Grant IA102124 and SECIHTI Project C-F-2023-G-106.
GW was supported by ARC project DP230102982 and the Max Planck-Humboldt research award.
The authors would also like to thank AlphaEvolve for stimulating discussions. We are grateful to Leonardo Patimo for pointing us to relevant references, and to Mario Marietti for some helpful email exchanges.

\section{Dyadically well-distributed permutations } \label{sec:dwd}

In this section we introduce the main object of study of the paper: the class of dyadically well-distributed permutations.
We establish their fundamental properties and determine the cardinality of the set of such permutations. 
In particular, we show that this class forms a Bruhat interval isomorphic to a hypercube, precisely the large hypercube mentioned in the introduction.

\subsection{Basic properties}
Throughout the paper we fix a positive integer $n$. 
We denote by $[n]$ the interval with $n$ elements $\{0, \dots, n-1\}$.
We think of $S_n$ as the set of bijections on $[n]$. 
Henceforth we will assume that $n=2^m$.

\begin{definition}\label{def: dwd}
    For $k\geq 0$, a \emph{basic $k$-interval} is an interval in $[2^m]$ of the form $[c 2^k, (c+1) 2^k - 1]$. 
    Henceforth, we identify $\mathbb{F}_2^m$ with $[2^{m}]$  via 
    \begin{equation}
        (a_0,a_1,\ldots , a_{m-1}) \mapsto \sum_{i=0}^{m-1}a_{i}2^{m-1-i}. 
    \end{equation}
Under this identification, a basic $k$-interval is determined by fixing the first $m-k$ bits.

   For a basic $k$-interval $S$, we let $P_S$ denote the unique binary string of length $m-k$ that appears as the first $m-k$ bits of every element of $S$.

We say that a permutation $\pi$ is \emph{dyadically well-distributed} if, for every basic $k_1$-interval $S$ and $k_2$-interval $T$, with $k_1 + k_2 = m$, there exists exactly one element $i \in S$ such that $\pi(i) \in T$. 

We denote by $\dwd{m}$ the set of all dyadically well-distributed permutations in $S_{2^m}$. 
\end{definition}

\begin{remark}\label{rmk: pictorial-dwd}
For  $\pi \in S_{2^m}$ we define its permutation matrix  $M_{\pi}=(m_{i,j})$ by $m_{i,j}=1$ if $\pi (j)=i$ and $0$ elsewhere. 
Identifying an element $\pi \in S_{2^m}$ with its permutation matrix $M_{\pi} \in \mathrm{Mat}_{2^m \times 2^m}$, we obtain a more pictorial interpretation of dyadically well-distributed permutations. For each $k = 0,\dots,m$, tile $M_{\pi}$ into blocks of size $2^{k} \times 2^{m-k}$, starting from the upper-left corner and proceeding in a left-to-right, top-to-bottom fashion (i.e., in reading order).  Then $\pi\in \dwd{m}$ if and only if each such block contains exactly one entry equal to $1$ (see Figure \ref{fig: x_m and y_m for m=4}).

When $k=0$ and $k=m$ the above condition is automatically satisfied for any permutation. Thus, to check whether $\pi \in \dwd{m}$ it is enough to check the above condition for $0<k<m$. 
 \end{remark}

 \begin{definition}
     By a \emph{fundamental block} in a matrix $M \in \mathrm{Mat}_{2^m \times 2^m}$ we will mean a $(2^k \times 2^{m-k})$-block as considered in Remark \ref{rmk: pictorial-dwd}.
 \end{definition}

\begin{definition}\label{def:x-y}
We inductively define elements $x_m\in S_{2^m}$ as follows:
\begin{align*}
x_1 & = ({\color{blue}0}, {\color{red}1}) \in S_2, \\
x_2 & =({\color{blue}0}, {\color{red}2}, {\color{blue}1}, {\color{red}3}) \in S_4, \\
x_3 & = ({\color{blue}0}{, \color{red}4}, {\color{blue}2}, {\color{red}6}, {\color{blue}1}, {\color{red}5}, {\color{blue}3}, {\color{red}7})\in S_8, \\
 & \vdots
\end{align*}
In other words, $$x_{m+1}=({\color{blue}x_m(0)},{\color{red}x_m(0)+2^m}, {\color{blue}x_m(1)},{\color{red}x_m(1)+2^m},\ldots,
{\color{blue}x_m(2^m-1)},
{\color{red} x_m(2^m-1) + 2^m}) \in S_{2^{m+1}}.$$
Define $y_m$ as the reverse element in one line-notation, i.e.
\[
y_m=(x_m(2^m-1),\cdots, x_m(1),x_m(0)).\]
\end{definition}

\begin{example}
\label{exm4}
    Let $m=4$. In this case we have
    \begin{equation}
    \begin{array}{l}
         x_4=(0, 8, 4, 12, 2, 10, 6, 14, 1, 9, 5, 13, 3, 11, 7, 15)   \\
         y_4=(15, 7, 11, 3, 13, 5, 9, 1, 14, 6, 10, 2, 12, 4, 8, 0) .
    \end{array}
    \end{equation}
    The permutation matrices of $x_4$ and $y_4$ are depicted in Figure \ref{fig: x_m and y_m for m=4}.
    This figure also shows the fundamental blocks, making it evident that $x_{4}$ and $y_{4}$ are elements of $\dwd{4}$.
\end{example}

\begin{figure}[htbp]
\centering
\begin{subfigure}{0.33\textwidth}
\centering

\DrawGridAndPoints{8}{2}{Fundamental bocks of size $2^3\times 2^1$}
\end{subfigure}\hfill
\begin{subfigure}{0.33\textwidth}
\centering

\DrawGridAndPoints{4}{4}{Fundamental bocks of size $2^2\times 2^2$}
\end{subfigure}\hfill
\begin{subfigure}{0.33\textwidth}
\centering

\DrawGridAndPoints{2}{8}{Fundamental blocks of size $2^1\times 2^3$}
\end{subfigure}

    \caption{$M_{x_4}$ (blue) and $M_{y_4}$ (red) with fundamental blocks highlighted.}
    \label{fig: x_m and y_m for m=4}
\end{figure}

\begin{remark} \label{rem: x and y as permutation matrices}
Let us explain why the permutations $x_m$ and $y_m$ are more natural than they may first appear.
Identifying the set $[2^m]$ with $\mathbb{F}_2^m$ as before,  the permutations  $x_m$ and $y_m$ correspond to affine maps in $\mathrm{GL}(\mathbb{F}_2^m) \ltimes \mathbb{F}_2^m\subset S_n $ defined by the formulae:
\[
x_m(a_0, \dots, a_{m-1}) = (a_{m-1}, \dots, a_0) \qquad \mbox{and} \qquad
y_m(a_0, \dots, a_{m-1}) = (\overline{a_{m-1}}, \dots, \overline{a_0}), 
\]
where $\overline{a_i}=a_i+1$. 
\end{remark}

For $a,b\in [n]$ we denote by $(a,b)$ the transposition that interchanges $a$ and $b$. 

\begin{definition}
    Let $\pi =(x_1,\ldots , x_n)\in S_n$.
    We define the \emph{length} of $\pi$, $\ell(\pi)$,  as the number of inversions of $\pi$. 
    That is,
    \begin{equation}
        \ell (\pi) =\; \# \{ (i,j) \in [n]^2 \mid i<j \mbox{ and }  \pi (i)> \pi (j) \} .
    \end{equation}
\end{definition}

In the following lemma we collect some basic facts about permutations $x_m$ and $y_m$. 
\begin{lemma}\label{xdwd}
Let $m\geq 1$. 
Then we have
\begin{enumerate}
    \item \label{unouno} $x_m$ and $y_m$ are involutions.
    \item \label{dosdos} $x_my_m=y_mx_m=w_0$, where $w_0$ denotes the longest element in $S_{2^{m}}$.
    \item $x_m, y_m \in \dwd{m}$.
    \item $\ell(x_m) =  2^{m-2} (2^m-(m+1))  $ and $\ell (y_m)= 2^{m-2}(2^m +(m-1)) $. Thus, $\ell (y_m) -\ell (x_m)=m2^{m-1}$. 
\end{enumerate}
\end{lemma}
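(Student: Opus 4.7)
The plan is to derive all four statements from the compact affine description of $x_m$ and $y_m$ given in Remark \ref{rem: x and y as permutation matrices}, which presents $x_m$ as bit-reversal on $\mathbb{F}_2^m$ and $y_m$ as bit-reversal composed with bitwise complementation. Parts (1) and (2) then drop out immediately: bit-reversal and bitwise complementation each square to the identity and commute, so both $x_m$ and $y_m$ are involutions; moreover, $x_m y_m$ acts by reversing twice and complementing, i.e.\ as pure complementation, which sends $i = \sum a_j 2^{m-1-j}$ to $2^m - 1 - i$ and hence equals $w_0$. The same computation with the factors swapped gives $y_m x_m = w_0$.

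For part (3), I would match the fundamental-block description with the affine action. A basic $k_1$-interval $S$ consists of the elements of $\mathbb{F}_2^m$ whose first $m - k_1$ bits form a prescribed string, while a basic $k_2$-interval $T$ with $k_1 + k_2 = m$ is defined by prescribing the first $k_1$ bits. Since $x_m$ reverses the bit string, the first $k_1$ bits of $x_m(i)$ are the last $k_1$ bits of $i$ in reverse order. Hence demanding both $i \in S$ and $x_m(i) \in T$ simultaneously fixes the first $m - k_1$ bits and the last $k_1$ bits of $i$, pinning $i$ uniquely. The argument for $y_m$ is identical, with the complemented bits being forced instead.

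For part (4), I would prove the formula for $\ell(x_m)$ by induction using the recursive construction of Definition \ref{def:x-y}. Reading $x_{m+1}$ in one-line notation as the interleaving of $x_m(i)$ at position $2i$ with $x_m(i) + 2^m$ at position $2i+1$, the inversions split into four types according to the parities of the two positions involved. Pairs of even positions and pairs of odd positions each reproduce one copy of $\ell(x_m)$. A pair $(2i, 2j+1)$ with $i \leq j$ never produces an inversion since $x_m(i) < 2^m \leq x_m(j) + 2^m$, while a pair $(2i+1, 2j)$ with $i < j$ always does, contributing $\binom{2^m}{2}$ inversions. This yields the recursion $\ell(x_{m+1}) = 2\ell(x_m) + 2^{m-1}(2^m - 1)$, which together with $\ell(x_1) = 0$ solves to the stated closed form. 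For $\ell(y_m)$, part (2) gives $y_m = x_m w_0$, so the standard identity $\ell(u w_0) = \ell(w_0) - \ell(u)$ reduces the computation to $\binom{2^m}{2} - \ell(x_m)$, which simplifies to the claimed formula, and subtracting yields $\ell(y_m) - \ell(x_m) = m \cdot 2^{m-1}$.

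The main obstacle I anticipate is the careful bookkeeping in the length recursion — in particular, verifying that pairs of positions $(2i, 2i+1)$ within the same fundamental block contribute nothing, and keeping the four parity cases straight. Once this is checked, the induction closes cleanly and the formula for $\ell(y_m)$ follows by elementary algebra.
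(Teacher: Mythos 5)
Your proof is correct, and for items (1)--(3) it follows essentially the paper's route: both arguments deduce (1) and (2) from the affine description in Remark \ref{rem: x and y as permutation matrices}, and both prove (3) by observing that the constraints $i \in S$ and $x_m(i) \in T$ pin down complementary sets of bits of $i$ (the paper reduces $y_m$ to $x_m$ via $y_m = x_m w_0$, whereas you treat $y_m$ directly; either is fine). The genuine divergence is in item (4). The paper obtains the recursion by viewing $x_m$ as a concatenation of two halves, each a rescaled copy of $x_{m-1}$ (the first half the even values, the second the odd values), and counts the cross-inversions by noting that an even entry $k$ in the first half contributes $k/2$; you instead use the interleaved form that is literally Definition \ref{def:x-y}, splitting inversions by the parities of the two positions, so that the cross-count is immediate: every pair consisting of an odd position followed by a later even position is an inversion, no pair of an even position followed by a later odd position is, and the cross-contribution is $\binom{2^m}{2}=2^{m-1}(2^m-1)$ on the nose. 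Both decompositions yield the same recursion $\ell(x_{m+1}) = 2\ell(x_m) + 2^{m-1}(2^m-1)$; yours has the small advantages of reading the splitting straight off the definition (the paper's concatenation structure is asserted ``by construction'' and takes a moment to verify) and of needing only the initial value $\ell(x_1)=0$. Finally, for $\ell(y_m)$ the paper declares the computation ``analogous,'' while you shortcut via $\ell(x_m w_0) = \ell(w_0) - \ell(x_m)$, which is cleaner and delivers $\ell(y_m)-\ell(x_m)=m2^{m-1}$ with no further counting.
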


\begin{proof}
The first two claims follow directly from Remark \ref{rem: x and y as permutation matrices}.

We now focus on the third claim. 
By Items \ref{unouno} and \ref{dosdos} we have $y_{m}=x_mw_0$. 
Therefore, it is enough to show $x_m\in \dwd{m}$. 

We fix   $k \in \{ 0,\dots,m\}$. 
Let  $S$ and $T$ be basic $(m-k)$- and $k$-intervals, respectively. 
Let $P_S \in \F_2^{k}$ and $P_T \in \F_2^{m-k}$ be the binary sequence associated to $S$ and $T$, respectively. 
In order to show that $x_m$  belongs to $\dwd{m}$ we must verify that there is a unique  $i \in S$  such that $x_m(i) \in T$. 
It is easy to see that the unique $i$ that satisfies both conditions is the one associated to the $m$-bit binary sequence $P_SP_T^{-1}$, where
$P_{T}^{-1}$ denotes the reverse of $P_T$. 
We have thus proved $x_m\in \dwd{m}$. 

It remains to prove the length formula for $x_m$; the case of $y_m$ being analogous.
We recall that $\ell(x_m)$ coincides with the number of inversions, i.e. the number of pairs $(i,j)$ with $i<j$ such that $x_m(i)>x_m(j)$.

It is immediate that $\ell(x_1)=0$ and $\ell(x_2)=1$.
For $m\ge 3$, we claim that
\begin{equation} \label{eq: recursion for length x_m}
   \ell(x_m)= 2 \ell(x_{m-1})+ \bigl(2^{m-1}-1\bigr)2^{m-2}. 
\end{equation}

To see this, write the one-line notation of $x_m$ as a concatenation
$x_m = (a_m \mid b_m)$,
where each block has length $2^{m-1}$.
By construction, both $a_m$ and $b_m$ encode copies of $x_{m-1}$, so together they contribute $2 \ell(x_{m-1})$ inversions.

Let $C_m$ be the number of inversions of $x_m$ involving an entry in $a_m$ and one in $b_m$.
Since entries in $a_m$ are even and those in $b_m$ are odd, each entry $k\in a_m$ contributes exactly $k/2$ to $C_m$.
Summing over all even numbers from $0$ to $2^{m}-1$, we obtain
\begin{equation}
C_m = (2^{m-1}-1)\cdot 2^{m-2}.
\end{equation}
This proves the recursion \eqref{eq: recursion for length x_m}.

Solving the recurrence with the initial conditions $\ell(x_1)=0$, $\ell(x_2)=1$, we find
\begin{equation}
\ell(x_m)=2^{m-2}\bigl(2^m-(m+1)\bigr),
\end{equation}
as desired.
\end{proof}

 The Bruhat order in the symmetric group is defined as follows. 
\begin{definition} \label{def:Bruhat}
    Let $\pi_1 , \pi_2 \in S_n$. We write $\pi_1 \leq \pi_2$ if there exists a sequence of transpositions $t_1, \ldots , t_k\in S_n$ such that 
    \begin{enumerate}
        \item $\pi_1 t_1\cdots t_k = \pi_2$;
        \item $\ell (\pi t_1\cdots t_{s-1}) <\ell (\pi_1 t_1\cdots t_{s})$ for all $1\leq s \leq k$. 
    \end{enumerate}
\end{definition}

\begin{definition} \label{def: Ehresmann matrix}
Let $w \in S_n$. 
We define the Ehresmann matrix $E_w$ associated to $w$ as 
\begin{equation}
  E_w(i,j) \;=\; \#\{  k \leq j \mid w(k) \geq i  \},
\qquad 0 \leq i,j < n.  
\end{equation}
That is, $E_w(i,j)$ counts how many values among $0, 1, \dots, j$ are taken by $w$ to values greater than or equal to $i$.
Equivalently, it counts the $1$'s
of the permutation matrix of $w$ lying in the southwest rectangle determined by $(i,j)$.    
\end{definition}

The following criterion, originally due to Ehresmann, is a useful tool for comparing two permutations in the Bruhat order. 
A proof (using slightly different conventions) can be found in \cite[Theorem 2.1.5]{bjorner2005combinatorics}. 

\begin{lemma} \label{lem: Ehresmann}  
Let $x, y\in S_n$. 
Then, we have
\begin{equation}
    x \leq y \text{ in the Bruhat order}
\quad\Longleftrightarrow\quad
E_x(i,j) \le E_y(i,j)
\quad\text{for all $i,j$.}
\end{equation}
\end{lemma}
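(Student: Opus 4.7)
The plan is to prove the two implications separately: the forward direction ($\Rightarrow$) by reducing to a single length-increasing transposition, and the backward direction ($\Leftarrow$) by strong induction on $\ell(y) - \ell(x)$ combined with an explicit construction of a length-increasing transposition that preserves the entrywise Ehresmann inequality. As a preliminary observation, the Ehresmann matrix determines the permutation: $w(j) = i$ is characterized by $E_w(i, j) - E_w(i, j-1) = 1$ together with $E_w(i+1, j) - E_w(i+1, j-1) = 0$. In particular $E_x = E_y$ forces $x = y$.

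For the forward direction, Definition~\ref{def:Bruhat} reduces the task to a single step $y = x \cdot (a, b)$ with $a < b$ and $x(a) < x(b)$. The permutation matrices of $x$ and $y$ agree outside columns $a$ and $b$; inside those columns the two $1$'s are swapped between rows $x(a)$ and $x(b)$. A routine case analysis on whether the column index $j$ lies in $[0, a-1]$, $[a, b-1]$, or $[b, n-1]$ yields
\[
E_y(i, j) - E_x(i, j) = \begin{cases} 1, & a \leq j < b \text{ and } x(a) < i \leq x(b), \\ 0, & \text{otherwise,} \end{cases}
\]
which is nonnegative.

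For the backward direction, I induct on $d := \ell(y) - \ell(x)$. When $x = y$ there is nothing to prove; otherwise let $a$ be the smallest index with $x(a) \neq y(a)$. The agreement $x|_{[0, a-1]} = y|_{[0, a-1]}$ gives $E_x(\cdot, a-1) = E_y(\cdot, a-1)$, and combining this with the column-$a$ increments $E_\bullet(i, a) - E_\bullet(i, a-1) = [\bullet(a) \geq i]$ and the hypothesis $E_x \leq E_y$ forces $y(a) > x(a)$. I then pick $b$ to be the \emph{smallest} index $> a$ with $x(b) \geq y(a)$ (such $b$ exists because $y(a)$ lies in the image of $x|_{[a,n-1]}$), set $t := (a, b)$, and $x' := x \cdot t$. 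A standard inversion count yields $\ell(x') > \ell(x)$, and---modulo the inequality $E_{x'} \leq E_y$ discussed below---the inductive hypothesis applied to $(x', y)$ (which has strictly smaller $d$) gives $x' \leq y$, whence $x \leq y$ by concatenating with the one-step chain $x \leq x'$ permitted by Definition~\ref{def:Bruhat}.

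The main obstacle is verifying $E_{x'} \leq E_y$ componentwise. By the forward computation, $E_{x'} - E_x$ equals $1$ on the rectangle $R = \{(i, j) : a \leq j < b,\ x(a) < i \leq x(b)\}$ and $0$ elsewhere, so the task reduces to proving the strict inequality $E_x(i, j) < E_y(i, j)$ throughout $R$. At $j = a$ this is immediate from the computation that produced $y(a) > x(a)$. For $a < j < b$ the chosen minimality of $b$ ensures $x(k) < y(a)$ for all $a < k < b$, which forces the column increments of $E_x$ to vanish on the rows $i \in (x(a), y(a)]$ and controls them on $i \in (y(a), x(b)]$; a column-by-column propagation then keeps the strict inequality alive throughout $R$. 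This combinatorial bookkeeping---not the inductive skeleton---is the only substantive technicality in the argument and is precisely what makes the backward implication harder than the forward one.
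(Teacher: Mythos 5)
First, note that the paper does not prove this lemma at all: it is quoted as Ehresmann's classical criterion with a pointer to \cite[Theorem 2.1.5]{bjorner2005combinatorics}, so there is no internal proof to compare with. Your preliminary observation, your forward direction (the rectangle formula for $E_{x(a,b)}-E_x$), and the overall inductive skeleton of the backward direction are all sound and follow the standard textbook route. However, the backward direction as written has a genuine gap, located exactly where you place the ``only substantive technicality'': your choice of $b$ is wrong, and with it the claimed strict inequality on the rectangle $R=\{(i,j): a\le j<b,\ x(a)<i\le x(b)\}$ fails. Taking $b$ to be the smallest index $>a$ with $x(b)\ge y(a)$ allows $x(b)>y(a)$, and then $R$ contains rows $i$ with $y(a)<i\le x(b)$; at column $j=a$ one has $E_y(i,a)-E_x(i,a)=[y(a)\ge i]=0$ for such $i$, so the inequality is not strict there and $E_{x'}\le E_y$ breaks. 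Concretely, in $S_3$ (with the paper's conventions) take $x=(0,2,1)$ and $y=(1,2,0)$, so $x\le y$ and $E_x\le E_y$. Then $a=0$, $y(a)=1$, your $b=1$ (since $x(1)=2\ge 1$), and $x'=x\cdot(0,1)=(2,0,1)$, which satisfies $E_{x'}(2,0)=1>0=E_y(2,0)$; moreover $x'\not\le y$ (both have length $2$ and are distinct), so the induction cannot proceed. The auxiliary claim that minimality of $b$ makes the column increments of $E_x$ vanish on rows $(x(a),y(a)]$ is also false in general, since values $x(k)$ with $a<k<b$ may lie strictly between $x(a)$ and $y(a)$.

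The repair is the classical one: choose $b$ minimal with $x(a)<x(b)\le y(a)$ (such $b$ exists because $y(a)$ itself occurs as $x(c)$ for some $c>a$). Then every row of $R$ satisfies $i\le y(a)$, so at $j=a$ the difference $E_y(i,a)-E_x(i,a)=[y(a)\ge i]$ equals $1$; and for $a<j<b$, minimality gives that each $k\in(a,j]$ with $x(k)\ge i$ in fact has $x(k)>y(a)$, so the extra increments of $E_x$ are bounded by $\#\{a<k\le j: x(k)>y(a)\}\le \#\{a<k\le j: y(k)>y(a)\}\le \#\{a<k\le j: y(k)\ge i\}$, where the middle inequality is the hypothesis $E_x(y(a)+1,j)\le E_y(y(a)+1,j)$ together with the agreement of $x$ and $y$ before $a$. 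This yields $E_x(i,j)<E_y(i,j)$ on all of $R$, which is the step your ``column-by-column propagation'' asserts but does not establish. (A minor further point: to make the induction on $d=\ell(y)-\ell(x)$ well-founded you should say a word, e.g.\ induct instead on $\binom{n}{2}-\ell(x)$, which strictly decreases at each step.)
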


For $x,y\in S_n$ we denote by $[x,y]$ the corresponding Bruhat interval  
$$[x,y]= \{ z\in S_n \mid x\leq z\leq y  \}.$$

The relation between dyadically well-distributed permutations and $x_m$ and $y_m$ is the following: 
\begin{equation}
   \dwd{m} =[x_m,y_m].
\end{equation}

For the moment we are in position to prove only one inclusion. The proof of the other inclusion is postponed to the next section. 

 \begin{lemma} \label{lem: one-inclusion}
     For all $m\geq 1$ we have $[x_m,y_m]\subset   \dwd{m} $. 
 \end{lemma}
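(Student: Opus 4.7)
The plan is to combine the Ehresmann criterion (Lemma \ref{lem: Ehresmann}) with the already-established fact $x_m, y_m \in \dwd{m}$ (Lemma \ref{xdwd}). The key observation is that the Ehresmann matrix $E_\sigma$ is forced to a specific common value at certain ``corner points'' whenever $\sigma$ is dyadically well-distributed, so the sandwich $E_{x_m}(i,j) \le E_\pi(i,j) \le E_{y_m}(i,j)$ pins down $E_\pi$ exactly at those points. Once $E_\pi$ is pinned at the corner points, the number of $1$'s in every fundamental block of $M_\pi$ becomes a fixed inclusion--exclusion, equal to the corresponding count for $x_m$.

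First, I would verify that for any $\sigma \in \dwd{m}$, any decomposition $k_1 + k_2 = m$, and any integers $0 \le c_1' \le 2^{k_2}$, $0 \le c_2' \le 2^{k_1}$, the dwd property yields
\[
E_\sigma\bigl(c_2' \cdot 2^{k_2},\; c_1' \cdot 2^{k_1} - 1\bigr) \;=\; c_1' \,(2^{k_1} - c_2'),
\]
with the convention $E_\sigma(\cdot, -1) = 0$ when $c_1' = 0$. Indeed, the first $c_1'$ basic $k_1$-intervals together cover $\{0,\dots, c_1' 2^{k_1} - 1\}$ and each contributes exactly one $j$ with $\sigma(j)$ in each of the $2^{k_1} - c_2'$ basic $k_2$-intervals contained in $[c_2' 2^{k_2},\, 2^m - 1]$. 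Applying this to $x_m$ and $y_m$ and invoking Lemma \ref{lem: Ehresmann}, the identity is forced on $E_\pi$ at every such corner point.

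Next, for a fundamental block associated to $S = [c_1 2^{k_1},\, (c_1+1) 2^{k_1} - 1]$ and $T = [c_2 2^{k_2},\, (c_2+1) 2^{k_2} - 1]$, the count $f_{S,T}(\pi) := \#\{j \in S : \pi(j) \in T\}$ is a standard inclusion--exclusion of four values of $E_\pi$, each evaluated at a corner point of the form above. Since all four values agree with their counterparts for $x_m$, one obtains $f_{S,T}(\pi) = f_{S,T}(x_m) = 1$, and Remark \ref{rmk: pictorial-dwd} then delivers $\pi \in \dwd{m}$.

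I do not anticipate a serious obstacle: the whole argument is a bookkeeping exercise once the ``corner point'' identity is spotted. The only bit of care is with the boundary cases $c_1 = 0$ or $c_2 + 1 = 2^{k_1}$, where some Ehresmann values vanish at $j = -1$ or $i = 2^m$; these are absorbed cleanly, since the formula $c_1'(2^{k_1} - c_2')$ already vanishes for $c_1' = 0$ or $c_2' = 2^{k_1}$.
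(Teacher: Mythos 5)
Your proposal is correct and takes essentially the same route as the paper: sandwich $E_\pi$ between $E_{x_m}$ and $E_{y_m}$ via the Ehresmann criterion, note that these two agree at the corner points of fundamental blocks because $x_m,y_m\in\dwd{m}$, and then recover the count of $1$'s in each fundamental block by inclusion--exclusion, forcing it to equal $1$. The only cosmetic difference is that you make the forced corner values explicit via the formula $c_1'\,(2^{k_1}-c_2')$, whereas the paper simply observes that $E_{x_m}(V_i)=E_{y_m}(V_i)$ at those vertices.
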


\begin{proof}
Let  $\pi \in [x_m,y_m]$ and $B$ be a fundamental block. 
Let  $U_\pi(B)$ denote  the number of $1$'s inside the block $B$ in the permutation matrix $M_{\pi}$. 
By Remark~\ref{rmk: pictorial-dwd} ,   in order to prove that $\pi\in \dwd{m}$,  it is enough to show that $U_{\pi} (B)=1$. 

Let $V_1, V_2, V_3$ and $V_4$ be the vertices of $B$ labeled in reading order, as is shown in Figure \ref{fig:divide-B}.
The inclusion-exclusion principle yields  
\begin{equation}\label{eq: inc-exc containing}
    U_\pi(B) = E_{\pi}(V_2) -E_{\pi}(V_1) - E_{\pi}(V_4)+E_{\pi} (V_3). 
\end{equation}
By Lemma \ref{xdwd} we know that $x_m, y_m \in \dwd{m}$. 
In particular, via Remark \ref{rmk: pictorial-dwd}, we obtain $E_{x_m}(V_{i}) = E_{y_m}(V_i)$ for all $1\leq i \leq 4$. 
On the other hand,  Lemma \ref{lem: Ehresmann}  yields 
\begin{equation}
    E_{x_m}(V_i) \leq E_{\pi} (V_i) \leq E_{y_m}(V_i),
\end{equation}
  for all $1\leq i \leq 4$. We conclude that $E_{x_m}(V_i) =E_\pi(V_i)$.  
  By combining these equalities with \eqref{eq: inc-exc containing} we obtain $U_{\pi}(B)=U_{x_m}(B)=1$, as we  wanted to show. 
\end{proof}

\begin{figure}
    \centering
    \begin{tikzpicture}[scale=1]
  \coordinate (V1) at (0,0);
  \coordinate (V2) at (2,0);
  \coordinate (V3) at (0,-2);
  \coordinate (V4)  at (2,-2);

  \draw (V1) -- (V2) -- (V4) -- (V3) -- cycle;


  \fill (V1) circle (2pt) node[above left]  {$V_1$};
  \fill (V2) circle (2pt) node[above right]       {$V_2$};
  \fill (V3) circle (2pt) node[left]        {$V_3$};
  \fill (V4)  circle (2pt) node[above right] {$V_4$};

  \node at (1,-1) {$B$};
\end{tikzpicture}
    \caption{ Vertices in a fundamental block }
    \label{fig:divide-B}
\end{figure}
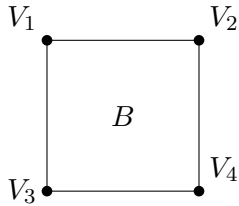

\subsection{A big Bruhat hypercube}

Let us introduce a special type of tiles in a $2^m \times 2^m$-matrix. 

\begin{definition}
We say that a basic $k_1$-interval $S$ and a basic $k_2$-interval $T$ are  \emph{complementary} if $k_1 + k_2 = m+1$ and $k_1, k_2 \geq 1$.
A \emph{complementary block} is a block in a $2^m \times 2^m$-matrix whose entries are indexed by $S \times T$, where $S$ and $T$ are complementary intervals. We denote by $\comI{m}$ the set of complementary blocks. 
\end{definition}

\begin{definition}\label{def: order-iso}
    Let $S_2=\{e,s\}$. 
    We define a function
    \[
    \varphi: \dwd{m} \to S_2^{\comI{m}}
    \]
    as follows. Let $S$ be a basic $k_1$-interval and $T$ a  basic $k_2$-interval such that $(S,T)\in \comI{m}$.  The submatrix $M_{(S,T)}$ of $M_\pi$ corresponding to $(S,T)$ is the union  of two disjoint fundamental blocks of size $2^{k_1-1} \times 2^{k_2 }$.
  Thus, if $\pi \in \dwd{m}$, there are exactly two entries equal to $1$ in $M_{(S,T)}$. After deleting all zero columns and rows in $M_{(S,T)}$, we end up in one of the following cases:
\[
\begin{pmatrix} 1 & 0 \\ 0 & 1 \end{pmatrix}  \qquad \text{or} \qquad \begin{pmatrix} 0 & 1 \\ 1 & 0 \end{pmatrix}.
\]
In the first case, we set $\varphi(\pi)(S,T) = e$, and in the second case we set $\varphi(\pi)(S,T) = s$.
\end{definition}

We can see $S_2^{\comI{m}}$ as partially ordered set as follows. 

\begin{definition}
    Let $m\geq 1$ and  $f_1,f_2\in S_2^{\comI{m}}$. 
    We write $f_1\leq f_2$ if 
    \begin{equation}
        f_1(P,Q) \leq f_2(P,Q)
    \end{equation}
    for all $(P,Q) \in \comI{m}$. 
    In other words,  $f_1\leq f_2$ if and only if for all $(P,Q) \in \comI{m}$ we have 
    \begin{equation}
        f_1(P,Q) = s \implies f_2(P,Q) =s. 
    \end{equation}
\end{definition}

We notice that $(S_2^{\comI{m}}, \leq )$ is isomorphic as a poset to a hypercube of rank $|\comI{m}|=m2^{m-1}$. 
The minimal and maximal element in $(S_2^{\comI{m}}, \leq )$, which we denote by $f_e$ and $f_s$, satisfy $f_e(P,Q)=e$ and $f_s(P,Q)=s$ for all $(P,Q)\in \comI{m}$. 

\begin{lemma}\label{lem:order-preserving}
The function $\varphi$ is order-preserving.
This is, for all $\pi_1,\pi_2 \in \dwd{m}$ such that $\pi_1 \leq \pi_2$ in the Bruhat order we have $\varphi (\pi_1) \leq \varphi (\pi_2)$ in $(S_2^{\comI{m}}, \leq)$. 
\end{lemma}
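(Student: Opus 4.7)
The plan is to reduce order-preservation of $\varphi$ to the Ehresmann criterion (Lemma~\ref{lem: Ehresmann}) by isolating, within each complementary block, a single Ehresmann value that detects the hypercube coordinate.

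First I would translate $\varphi(\pi)(S,T) = s$ into a count of $1$'s of $M_\pi$ in a specific region. Split $S$ into its two basic $(k_1-1)$-halves, and similarly $T$ into its two basic $(k_2-1)$-halves, giving four quadrants of $M_{(S,T)}$. The two $1$'s of $M_\pi$ inside $M_{(S,T)}$ occupy either the diagonal pair of quadrants (giving $\varphi(\pi)(S,T) = e$) or the antidiagonal pair (giving $\varphi(\pi)(S,T) = s$); equivalently, $\varphi(\pi)(S,T) = s$ if and only if $N_\pi(R) = 1$, where $R$ is the lower-left quadrant of $M_{(S,T)}$ and $N_\pi(R)$ counts the $1$'s of $M_\pi$ inside $R$. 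Expanding $N_\pi(R)$ by inclusion-exclusion on Ehresmann values in the manner of Lemma~\ref{lem: one-inclusion} gives an alternating sum of four Ehresmann values at the corners of $R$.

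The crucial step is to observe that the complementarity condition $k_1 + k_2 = m+1$ forces exactly three of these four corners to be \emph{grid-aligned} in the sense of Remark~\ref{rmk: pictorial-dwd}: two lie on the level-$k_1$ grid and the third lies on the level-$(k_1-1)$ grid. For any grid-aligned corner the Ehresmann value is a product of fundamental-block counts and hence is determined by $\dwd{m}$ alone, independently of $\pi$. The remaining ``inner'' corner $(i^\ast, j^\ast)$, sitting at the midpoint of $S \times T$, is aligned with no level-$k$ grid since $(k_1-1)+(k_2-1) = m-1 < m$. Substituting back into the inclusion-exclusion expression, one obtains $N_\pi(R) = E_\pi(i^\ast, j^\ast) - C$ for an explicit constant $C = C(S,T)$. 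Since the left side lies in $\{0,1\}$, the Ehresmann value $E_\pi(i^\ast, j^\ast)$ is confined to the two-element set $\{C, C+1\}$ as $\pi$ varies over $\dwd{m}$, with $\varphi(\pi)(S,T) = s$ corresponding to the larger value $C+1$.

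The conclusion is then one line from the Ehresmann criterion: if $\pi_1 \leq \pi_2$ and $\varphi(\pi_1)(S,T) = s$, then $C+1 = E_{\pi_1}(i^\ast, j^\ast) \leq E_{\pi_2}(i^\ast, j^\ast) \leq C+1$, forcing equality and hence $\varphi(\pi_2)(S,T) = s$. Running this for every $(S,T) \in \comI{m}$ yields $\varphi(\pi_1) \leq \varphi(\pi_2)$. The main obstacle is spotting this three-out-of-four alignment pattern: once one sees that the complementarity condition is exactly what makes one corner of $R$ undetermined by $\dwd{m}$, and that this undetermined Ehresmann value is precisely the one controlling the hypercube coordinate, the argument collapses to a sandwich in a two-element set.
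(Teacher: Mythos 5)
Your proposal is correct and is essentially the paper's own argument: the paper likewise subdivides the complementary block into quadrants, notes that three corners of the lower-left rectangle are fundamental-block vertices whose Ehresmann values are the same for every element of $\dwd{m}$, and uses inclusion--exclusion so that the Ehresmann value at the remaining inner corner detects the coordinate, concluding via Lemma~\ref{lem: Ehresmann}. The only differences are cosmetic: the paper phrases it by contraposition rather than your direct two-value sandwich, and your word ``product'' for the grid-corner Ehresmann value should be ``sum'' of fundamental-block counts.
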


\begin{proof}
    We argue by contraposition. 
    Suppose that $\varphi(\pi_1) \not\leq \varphi(\pi_2)$. 
    Then there exists a complementary pair $(S,T)\in \comI{m}$ such that
    \begin{equation}\label{eq:no-menor}
        \varphi(\pi_1)(S,T)=s 
        \qquad\text{and}\qquad 
        \varphi(\pi_2)(S,T)=e. 
    \end{equation}
    Divide the complementary pair $(S,T)$ into four congruent rectangles, as illustrated in Figure \ref{fig:divide-CB}. 
    Let $R$ be the bottom-left rectangle in this subdivision, and let $V_1$, $P$, $V_2$, and $V_3$ be the vertices of $R$ listed in reading order. 
    Note that the points $V_i$ are vertices of fundamental blocks. 
    Since $\pi_1,\pi_2\in \dwd{m}$, we have $E_{\pi_1}(V_i)=E_{\pi_2}(V_i)$ for all $i\in\{1,2,3\}$. 
    
    Let $X_{\pi_i}(R)$ denote the number of $1$'s inside $R$ in the permutation matrix $M_{\pi_i}$. 
    By \eqref{eq:no-menor}, we have $X_{\pi_1}(R)=1$ and $X_{\pi_2}(R)=0$. 
    By inclusion--exclusion, we have
    \begin{equation}
        E_{\pi_i}(P)
        = E_{\pi_i}(V_1) + E_{\pi_i}(V_3) - E_{\pi_i}(V_2) + X_{\pi_i}(R).
    \end{equation}
    It follows that $E_{\pi_1}(P)-E_{\pi_2}(P)  =1 $. 
    In particular,  $E_{\pi_2}(P) < E_{\pi_1}(P)$.
    Therefore, Lemma~\ref{lem: Ehresmann} yields 
    $\pi_1 \not\leq \pi_2$, as we wanted to show.
\end{proof}

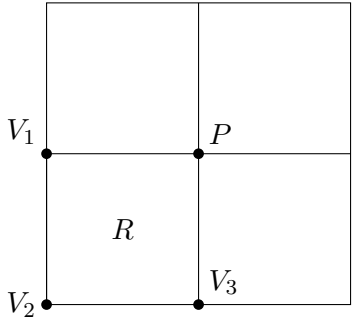
\begin{figure}
    \centering
    \begin{tikzpicture}[scale=1]
  \coordinate (V1) at (0,0);
  \coordinate (P) at (2,0);
  \coordinate (V3) at (0,-2);
  \coordinate (V4)  at (2,-2);
  \coordinate (B1) at (0,-2);
  \coordinate (B2) at (2,2);
  \coordinate (B3) at (4,-2);
  \coordinate (T1) at (0,2);
  \coordinate (T2) at (2,2);
  \coordinate (T3) at (4,2);
  \coordinate (XX) at (4,0);

  \draw (T1) -- (T3) -- (B3) -- (B1) -- cycle;

  \draw (V1) -- (V3);   
  \draw (V4) -- (B2);   
  \draw (V1) -- (XX);   

  \fill (V1) circle (2pt) node[above left]  {$V_1$};
  \fill (P) circle (2pt) node[above right]       {$P$};
  \fill (V3) circle (2pt) node[left]        {$V_2$};
  \fill (V4)  circle (2pt) node[above right] {$V_3$};

  \node at (1,-1) {$R$};
\end{tikzpicture}
    \caption{Dividing a complementary block as in the proof of Lemma \ref{lem:order-preserving}}
    \label{fig:divide-CB}
\end{figure}

\begin{lemma} \label{lem:min-max}
    For all $m\geq 1$ we have
    $\varphi (x_m) = f_e$ and $\varphi (y_m) = f_s$. 
\end{lemma}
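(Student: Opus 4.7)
The plan is to verify both identities by a direct bit-level computation, using the affine descriptions of $x_m$ and $y_m$ from Remark~\ref{rem: x and y as permutation matrices}. Fix a complementary pair $(S,T)\in\comI{m}$ with $S$ a basic $k_1$-interval, $T$ a basic $k_2$-interval, and $k_1+k_2=m+1$. I will write an input $i\in S$ as $i=(P_S,a_0,a_1,\dots,a_{k_1-1})$ and an output in $T$ as $(P_T,b_0,\dots,b_{k_2-1})$. Under the identification of $\mathbb{F}_2^m$ with $[2^m]$ from Definition~\ref{def: dwd}, the leading free bit $a_0$ is the most significant bit of the $a$-block, so the two halves of $S$ (i.e., the two basic $(k_1-1)$-intervals whose products with $T$ form the two fundamental sub-blocks of $M_{(S,T)}$) are distinguished by $a_0=0$ versus $a_0=1$; the analogous statement holds for $T$ and $b_0$.

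Next I locate the two ones of $M_{x_m}$ inside $M_{(S,T)}$. Applying $x_m$ to $i$ produces the string $(a_{k_1-1},\dots,a_0,\mathrm{rev}(P_S))$. The condition $x_m(i)\in T$ requires its first $m-k_2=k_1-1$ bits to coincide with $P_T$, which determines $a_1,\dots,a_{k_1-1}$ and leaves only $a_0$ free. The remaining $k_2$ bits, forming the free part $b$ of the output, are then $(a_0,\mathrm{rev}(P_S))$; in particular $b_0=a_0$. Therefore the two ones pair the $a_0=0$ column with the $b_0=0$ row and the $a_0=1$ column with the $b_0=1$ row, so after deleting zero rows and columns the resulting $2\times 2$ matrix is the identity. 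Hence $\varphi(x_m)(S,T)=e$, and since $(S,T)$ was arbitrary, $\varphi(x_m)=f_e$.

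The argument for $y_m$ runs in parallel, using $y_m(i)=(\overline{a_{k_1-1}},\dots,\overline{a_0},\overline{\mathrm{rev}(P_S)})$. Forcing the leading $k_1-1$ bits to equal $P_T$ again fixes $a_1,\dots,a_{k_1-1}$ and leaves $a_0$ free, but now the free part of the output reads $b=(\overline{a_0},\overline{\mathrm{rev}(P_S)})$, so $b_0=\overline{a_0}$. This swaps the two halves of $T$ relative to the $x_m$ case, so the $2\times 2$ matrix becomes the anti-identity and $\varphi(y_m)(S,T)=s$; consequently $\varphi(y_m)=f_s$. The only point requiring care is the alignment between the leading free bit $a_0$ (resp.\ $b_0$) and the correct half of $S$ (resp.\ $T$) in the decomposition of $M_{(S,T)}$ into fundamental sub-blocks; once this matching is in place, both identities drop out of the same short calculation and no deeper obstacle arises.
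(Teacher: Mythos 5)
Your proof is correct and follows essentially the same route as the paper: both use the affine bit-reversal description of $x_m$ and $y_m$ from Remark~\ref{rem: x and y as permutation matrices}, identify the two preimages in $S$ landing in $T$, and observe that the single free bit $a_0$ reappears (complemented in the $y_m$ case) as the leading free bit of the output, which fixes the $2\times 2$ pattern. The only cosmetic differences are that you treat all $k_1\geq 1$ uniformly and spell out the $y_m$ case, where the paper splits off $k=1$ and declares $y_m$ analogous.
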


\begin{proof}
Let us show the statement for $x_m$, the one for $y_m$ is analogous. 
Let $(S,T)$ be a complementary pair. 
We need to show that $\varphi(x_m){(S,T)} = e$.
Note that this means that, if $a < b \in S$ are the two elements such that $x_m(a), x_m(b) \in T$, then $x_m(a) < x_m(b)$.

We use the interpretation of $x_m$ as an endomorphism of $\mathbb{F}_2^{m}$.
Assume $S$ is a basic $k$-interval, and $T$ is a basic $m-k+1$-interval, with $k \geq 1$. 
Let $P_S\in \F_2^{m-k}$ and $P_T\in \F_2^{k-1}$ be the binary sequences associated to $S$ and $T$, respectively. 

If $k = 1$, then $S=\{P_S0,P_S1\}$ and $T = \mathbb{F}_2^{m}$. Then, $a=P_S 0$ and $b=P_S 1$ (since $a<b$). 
Thus, $x_m(a)=0 P_S^{-1}$ and $x_m(b)=1 P_S^{-1}$.
It follows that $x_m(a)=P_S0 < P_S1=x_m(b)$, since the natural order on the interval $[2^m]$ becomes the lexicographic order on $\F_2^m$, via our identification $[2^{m}] \leftrightarrow \F_2^m$.

We now assume $k > 1$.
In this case the two elements in $S$ mapping to $T$ are 
\[a = P_S0P_T^{-1}  < P_S1P_T^{-1} =b .\]
It is clear that $x_m(a)= P_T0P_S^{-1} <P_T1P_S^{-1}= x_m(b)$ and this finishes the proof.  
\end{proof}

\begin{definition}
  Let $m\geq 1$ and $1\leq k \leq m$. 
  For any $k$-bit binary string $B$ we write $B\star$ for a $m$-bit  binary string  starting with $B$ (so that $\star$ is a binary string of length $m-k$).

 Let $\pi \in \dwd{m}$  and $(S,T) \in \comI{m}$.
 Let $P_S$ and $P_T$ be the binary sequences associated to $S$ and $T$, respectively. 
 Let  $P_S\star_1,P_S\star_2 \in \F_2^m$ be the two $m$-bit binary strings starting with $P_S$ which are sent by $\pi$ to strings starting with $P_T$.
 We define the {\em flip} $Fl_{S,T}(\pi)$ to be the permutation which agrees with $\pi$ on all strings other than $P_S\star_1$ and $P_S\star_2$, and which sends $P_S \star_1$ to $\pi(P_S\star_2)$ and $P_S\star_2$ to $\pi (P_S\star_1)$.  In other words, $Fl_{S,T}(\pi)$ is $\pi$ multiplied on the right by the transposition $(P_S\star_1,P_S\star_2)$.
\end{definition}

In the following lemma we identify $S_2$ with $\Z/2\Z$, sending $e$ to $0$ and $s$ to $1$.
We can thus identify $S_2^{\comI{m}}$ with the set of functions $f: \comI{m} \rightarrow \Z/2\Z$.

    \begin{lemma}\label{lem: Jordan}
    Let $\pi \in \dwd{m}$ and $(S,T) \in \comI{m}$.
    Let $\chi_{S,T}: \comI{m}\rightarrow \Z/2\Z$ be the  function sending $(S,T)$ to $1$ and every other complementary block to $0$.
    Then, we have
    \begin{enumerate}
        \item  \label{UNO} $Fl_{S,T}(\pi) \in \dwd{m}$.
        \item  \label{DOS} $\varphi(Fl_{S,T}(\pi)) = \varphi(\pi) + \chi_{S,T}$.
        \item  \label{TRES}\begin{equation}\label{eq:flipinversion}
    \ell(Fl_{S,T}(\pi)) = \left\{\begin{array}{rl}
        \ell(\pi) +1,    & \mbox{if }\varphi(\pi)(S,T) = 0;  \\[3pt]
        \ell(\pi) -1,    & \mbox{if }\varphi(\pi)(S,T) = 1.
    \end{array}
    \right.
    \end{equation}
    \end{enumerate}
    \end{lemma}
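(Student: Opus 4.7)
The plan is to prove the three items in the order (3), (1), (2), using throughout the following \emph{half-splitting} observation. Since $k_1+k_2=m+1$, splitting $T$ into its two halves $T=T_L\sqcup T_R$ produces basic $(k_2-1)$-intervals; each pair $(S,T_L)$ and $(S,T_R)$ is then a \emph{fundamental} pair (indices sum to $m$), so DWD forces each half of $T$ to receive exactly one of $\pi(P_S\star_1),\pi(P_S\star_2)$. Splitting $S$ symmetrically forces $P_S\star_1$ and $P_S\star_2$ into different halves of $S$.

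For (3), set $t=(a,b)$ with $a=P_S\star_1<b=P_S\star_2$. A routine computation of inversion changes under right multiplication by a transposition yields
\begin{equation}
\ell(\pi t)-\ell(\pi) \;=\; \sigma(1+2N),
\end{equation}
where $\sigma=+1$ if $\pi(a)<\pi(b)$ and $-1$ otherwise, and $N$ counts the $c$ with $a<c<b$ and $\pi(c)$ strictly between $\pi(a)$ and $\pi(b)$. Any such $c$ lies in $S$ (since $a,b$ share prefix $P_S$, so the integer interval $(a,b)\subset S$), and $\pi(c)$ lies in $T$ (since $\pi(a),\pi(b)$ share prefix $P_T$). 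Thus $c\in S$ with $\pi(c)\in T$, and DWD on the complementary pair $(S,T)$ forces $c\in\{a,b\}$, contradicting $a<c<b$. So $N=0$, and Definition~\ref{def: order-iso} gives $\sigma=+1\iff\varphi(\pi)(S,T)=e$, matching~\eqref{eq:flipinversion}.

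For (1), by Remark~\ref{rmk: pictorial-dwd} it suffices to check that every fundamental block $B$ (rows a basic $k$-interval $T_B$, columns a basic $(m-k)$-interval $S_B$) contains exactly one $1$ of $M_{Fl_{S,T}(\pi)}$. Only columns $P_S\star_1,P_S\star_2$ are affected. If both lie in $S_B$, the flip merely permutes two $1$'s inside $B$; if neither, nothing in $B$ changes. In the remaining case, exactly one lies in $S_B$; since the two columns share prefix $P_S$, they differ in their first $k$ bits, which forces $k>m-k_1$, i.e., $k\ge k_2$. Then $T_B$ is a basic dyadic interval of size $\ge|T|$, and by laminarity either $T\subseteq T_B$ or $T\cap T_B=\emptyset$. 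In either case $[\pi(P_S\star_1)\in T_B]=[\pi(P_S\star_2)\in T_B]$, so the $1$ added to $B$ exactly cancels the $1$ removed.

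For (2), the value at $(S,T)$ flips by inspection. For a complementary pair $(S',T')\ne(S,T)$, we must show $\varphi$ is unchanged. If $S\cap S'=\emptyset$, the flip touches no column of $S'$; if $T\cap T'=\emptyset$, the flipped $1$'s sit on rows in $T$, hence never in $T'$; in both cases $M_{(S',T')}$ is unchanged. Otherwise laminarity and $k_1+k_2=k_1'+k_2'=m+1$ force either $S\subsetneq S'$ with $T'\subsetneq T$, or the symmetric alternative. In the first, half-splitting on $(S,T)$ rules out both $\pi(P_S\star_i)\in T'$ (they would lie in the same half of $T$). If neither does, $M_{(S',T')}$ is again unchanged. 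If exactly one, say $\pi(P_S\star_1)\in T'$, the flip shifts this contribution from column $P_S\star_1$ to column $P_S\star_2$, while the other contribution of $M_{(S',T')}$ comes from some witness $d\notin\{P_S\star_1,P_S\star_2\}$. DWD on $(S,T)$ forces $d\notin S$: were $d\in S$, then $\pi(d)\in T'\subseteq T$ would put $d\in\{P_S\star_1,P_S\star_2\}$. Hence $d$ lies outside the interval $S$, on the same side of both $P_S\star_i$, so the column order in the resulting $2\times 2$ pattern is preserved. Since the relevant row $\pi(P_S\star_1)=\pi'(P_S\star_2)$ is also unchanged, $\varphi$ at $(S',T')$ is preserved. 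The symmetric alternative is analogous (rows and columns swap roles). The main obstacle is precisely this bookkeeping in (2): invoking the correct half-splitting argument in each subcase and confirming that the ``witness'' $d$ lies outside the relevant basic interval.
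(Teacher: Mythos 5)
Your proof is correct, but it is organized rather differently from the paper's, so a comparison is worthwhile. For item \eqref{TRES} you invoke the standard formula $\ell(\pi t)-\ell(\pi)=\pm(1+2N)$ and show $N=0$ because any intermediate column $c$ would lie in $S$ with $\pi(c)\in T$, contradicting that $a,b$ are the only such elements; the paper simply asserts the $\pm 1$ change from the local-move picture, so your treatment is actually the more explicit one. For item \eqref{UNO} you verify directly, block by block, that the count of $1$'s in every fundamental block is preserved, using the laminarity of dyadic intervals ($k\ge k_2$ forces $T\subseteq T_B$ or $T\cap T_B=\emptyset$); the paper instead argues by contradiction, assuming some fundamental block $(U,V)$ loses its $1$ and using the prefix-length comparison $|P_U|+|P_V|=m>m-1=|P_S|+|P_T|$ to show the other flipped entry must land in that block. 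The two arguments use the same structural input and are of comparable length. The biggest divergence is item \eqref{DOS}: you run a case analysis on the nesting of $(S',T')$ relative to $(S,T)$ (disjointness cases, then $S\subsetneq S'$ with $T'\subsetneq T$ and its mirror), using half-splitting to limit how many flipped entries can meet the other block and a ``witness'' column $d\notin S$ to see that the $2\times 2$ pattern is preserved. The paper's argument is shorter and avoids this bookkeeping entirely: since $(S,T)\cap(S',T')$ is contained in a single fundamental block, at most one of the two $1$'s of $(S',T')$ is touched by the flip, and the untouched $1$ alone already determines $\varphi(\cdot)(S',T')$, which therefore cannot change. Your route buys more explicit control (and in \eqref{TRES} fills a gap the paper leaves to the reader), at the cost of longer case analysis where the paper's single observation suffices.
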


 \begin{proof}
Let $\sigma \coloneqq Fl_{S,T}(\pi)  \in \dwd{m}$.
By definition of the flip, the permutation matrices $M_\pi$ and $M_{\sigma}$ are identical outside the complementary pair $(S,T)$.
        Within that block, they differ by a local move
        \begin{equation}\label{eq: covering-switch}
\begin{pmatrix} 1 & \cdots & 0 \\
\vdots &  \mbox{ \Large 0}  & \vdots \\ 0 & \cdots & 1 \end{pmatrix} \longleftrightarrow  \begin{pmatrix} 0 & \cdots & 1 \\
\vdots &  \mbox{ \Large 0}  & \vdots \\ 1 & \cdots & 0 \end{pmatrix}.
\end{equation}
We prove each item separately.
\begin{enumerate}
\item Following our established notation, we write $\pi(P_S \star_1) = P_T *_1$ and $\pi(P_S \star_2) = P_T *_2$ for the two $1$ entries in the permutation matrix of $\pi$ which are moved by the flip.

Suppose that  $\sigma \notin \dwd{m}$.  Then there is a fundamental block $(U,V)$ such that the equation $\sigma(P_U \star) = P_V \star'$ has no solution.  Since $\pi \in \dwd{m}$, the equation $\pi(P_U \star) = P_V \star'$ {\em does} have a solution, and indeed a unique solution.  Since $\sigma$ differs from $\pi$ only on the inputs $P_S \star_1$ and $P_S \star_2$, this solution must be either $\pi(P_S \star_1) = P_T *_1$ or $\pi(P_S \star_2) = P_T *_2$.  Without loss of generality, we assume it is the former.  Then we know $P_S \star_1$ starts with $P_U$ and $P_T *_1$ starts with $P_V$.

By the definition of complementary pair and fundamental block, we know that $|P_S| + |P_T| = m-1$ and $|P_U| + |P_V| = m$.  This implies that either $|P_U| \leq |P_S|$ or $|P_V| \leq |P_T|$.  In the former case, the fact that $P_S \star_1$ starts with $P_U$ now implies that $P_U$ is a prefix of $P_S$.  In particular, $P_S \star_2$ starts with $P_U$.  By the definition of the flip,
$$
\sigma(P_S \star_2) = P_T *_1
$$
and since $P_T *_1$ starts with $P_V$, this provides a solution to $\sigma(P_U \star) = P_V \star'$, contradicting our hypothesis. If on the other hand $|P_V| \leq |P_T|$, then $P_V$ is a prefix of $P_T$ and the equation $\sigma(P_S \star_1) = P_T *_2$ provides a contradiction in exactly parallel fashion.  We conclude that $\sigma = Fl_{S,T}(\pi)$ lies in $\dwd{m}$, as claimed.

\item 
  We now compute  $\varphi(\sigma)$.
  We first note that 
$$\varphi(\sigma)(S,T)= \varphi(\pi)(S,T) + 1,$$
because the flip switches the positions of the two $1$'s in the block $(S,T)$. 

  It remains to show is that the flip does not change the value of $\varphi(\pi)(S_1,T_1)$ for any other complementary pair $(S_1,T_1)\neq (S,T)$.
  The key point is that, by the definition of $\varphi$, the value of $\varphi(\sigma)(S_1,T_1)$ is determined by the knowledge of {\em either one} of the two $1$'s in the complementary pair $(S_1,T_1)$. 
  For instance, if one of these $1$'s is located in the upper left quarter of $(S_1,T_1)$ then the other $1$ in $(S_1,T_1)$ is forced to be located in the lower right quarter, and $\varphi(\sigma)(S_1,T_1) = 0$. 
  
  Now of the two $1$'s lying in $(S_1,T_1)$, at most one of them can lie in $(S,T)$, since $(S,T) \cap (S_1,T_1)$ is contained in a fundamental block and can thus contain at most one $1$. 
  This means that there is some $1$ such that it lies in $(S_1,T_1)$ but does {\em not} lie in $(S,T)$. 
  Since $M_{\pi}$ and $M_{\sigma}$ coincide outside $(S,T)$ we conclude that this $1$ has the same location in both $M_{\pi}$ and $M_{\sigma}$.
   Since the position of this common $1$ determines the value of $\varphi(\sigma)(S_1,T_1)$ and $\varphi(\pi)(S_1,T_1)$, we conclude that the two values are equal, as claimed.

\item   Finally, we compute $\ell (\sigma)$, this is, the number of inversions of $\sigma$.  
Since $M_\pi$ and $M_{\sigma}$ differ only by a local move as in \eqref{eq: covering-switch}, it is clear that $\ell(\sigma) = \ell(\pi) \pm 1$. 

Let us be more precise. 
Let $i,j \in [2^m]$ be the columns where the two $1$'s are located. 
Suppose that in \eqref{eq: covering-switch} the left (resp. right) matrix corresponds to a sub-matrix of $M_{\pi}$ (resp. $M_{\sigma}$). 
In this case we have $\varphi (\pi)(S,T)=0$.
On the other hand,  the pair $(i,j)$ is an inversion of $\sigma $ but not an inversion of $\pi$.
Therefore, $\ell (\sigma) = \ell (\pi) +1$. 
This proves the first case of \eqref{eq:flipinversion}.
The other case is analogous.      
\end{enumerate}
Having proved the three claims the proof is complete. 
\end{proof}

\begin{prop}\label{prop: phi is iso}
The map $\varphi: \dwd{m} \to S_2^{\comI{m}}$ is a poset isomorphism, i.e., $\pi_1 \leq \pi_2$ if and only if $\varphi(\pi_1) \leq \varphi(\pi_2)$. 
\end{prop}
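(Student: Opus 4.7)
The plan is to use the flip lemma (Lemma~\ref{lem: Jordan}) together with a recursive cardinality count for $\dwd{m}$. Lemma~\ref{lem:order-preserving} already gives the forward direction $\pi_1 \le \pi_2 \Rightarrow \varphi(\pi_1) \le \varphi(\pi_2)$, so it remains to establish bijectivity of $\varphi$ and the reverse-order implication $\varphi(\pi_1) \le \varphi(\pi_2) \Rightarrow \pi_1 \le \pi_2$.

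First I would prove surjectivity. Given $f \in S_2^{\comI{m}}$, enumerate the coordinates $(S_1,T_1),\ldots,(S_r,T_r)$ on which $f$ takes the value $s$, and apply the flips $Fl_{S_i,T_i}$ to $x_m$ in this order. By Lemma~\ref{lem: Jordan}(1) and~(2) each intermediate permutation lies in $\dwd{m}$ and its $\varphi$-value accumulates $\chi_{S_i,T_i}$ at step $i$; since the previously-flipped coordinates are distinct from $(S_i,T_i)$, this coordinate is still an $e$-coordinate when flipped, so Lemma~\ref{lem: Jordan}(3) guarantees a length-$+1$ Bruhat covering at each step. Denoting the endpoint by $\psi(f)$, we obtain $\psi(f) \in \dwd{m}$ with $\varphi(\psi(f)) = f$, $\ell(\psi(f)) = \ell(x_m) + |f|$, together with a saturated Bruhat chain $x_m < \cdots < \psi(f)$.

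For injectivity, I would compute $|\dwd{m}| = 2^{m \cdot 2^{m-1}} = |S_2^{\comI{m}}|$ by induction on $m$. Every $\pi \in \dwd{m}$ is encoded uniquely as a triple $(\pi_1,\pi_2,\epsilon)$, where $\pi_1(i) = \lfloor \pi(i)/2 \rfloor$ for $i < 2^{m-1}$ and $\pi_2(i) = \lfloor \pi(2^{m-1}+i)/2 \rfloor$ are permutations of $[2^{m-1}]$, and $\epsilon(i) = \pi(i) \bmod 2$ is a parity function. Testing the DWD condition against pairs of the form $(S,T')$ with $T' = \{2t,2t+1 : t \in T\}$ shows that $\pi_1,\pi_2 \in \dwd{m-1}$; bijectivity of $\pi$ forces the pairing constraint that in each output pair $\{2t,2t+1\}$ exactly one preimage comes from each half of the input, and conversely any such triple defines an element of $\dwd{m}$. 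Counting the valid parities (two choices per output pair) gives $|\dwd{m}| = |\dwd{m-1}|^2 \cdot 2^{2^{m-1}}$; with base case $|\dwd{1}| = 2$ this unrolls to the claimed value. Combined with surjectivity, $\varphi$ is then a bijection.

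Finally, for the reverse-order implication, if $\varphi(\pi_1) \le \varphi(\pi_2)$, apply the flips at the coordinates where $\varphi(\pi_2) = s$ but $\varphi(\pi_1) = e$, successively starting from $\pi_1$. By Lemma~\ref{lem: Jordan} these remain in $\dwd{m}$ and produce an ascending Bruhat chain terminating at some $\pi^* \in \dwd{m}$ with $\varphi(\pi^*) = \varphi(\pi_2)$; injectivity forces $\pi^* = \pi_2$, giving $\pi_1 \le \pi_2$ in Bruhat order. The main technical hurdle is the bijection $\pi \leftrightarrow (\pi_1,\pi_2,\epsilon)$ underlying the cardinality count, which requires a careful but routine translation of the DWD condition through the halving of $[2^m]$.
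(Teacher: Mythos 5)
Your proposal is correct, and it follows the paper's strategy for surjectivity (iterated flips starting from $x_m$, via Lemma~\ref{lem:min-max} and Lemma~\ref{lem: Jordan}) and for the order-reflecting direction (flip along the coordinates where the two $\varphi$-values differ, each step length-increasing, then invoke injectivity), but it replaces the paper's injectivity argument by a genuinely different one. The paper proves injectivity directly, by decoding: for each column $a$, the values $\varphi(\pi)(S_k,T_k)$ on the nested complementary blocks containing that column successively block $2^{m-1}, 2^{m-2},\dots,1$ rows, leaving a single possible position for $\pi(a)$; this gives an explicit inverse of $\varphi$. You instead count: the decomposition $\pi \leftrightarrow (\pi_1,\pi_2,\epsilon)$ does work (a basic $k_1$-interval with $k_1 \le m-1$ lies in one half of the domain, and halving the codomain turns basic $(k_2)$-intervals of $[2^{m-1}]$ into basic $(k_2+1)$-intervals of $[2^m]$, so the dyadic conditions translate exactly as you claim in both directions), yielding $|\dwd{m}| = |\dwd{m-1}|^2\cdot 2^{2^{m-1}}$ and hence $|\dwd{m}| = 2^{m2^{m-1}} = |S_2^{\comI{m}}|$, after which surjectivity onto a finite set of equal cardinality gives bijectivity. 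One small inaccuracy in your write-up: in the forward direction of the decomposition, the fact that each output pair $\{2t,2t+1\}$ has exactly one preimage in each half is the $(k_1,k_2)=(m-1,1)$ instance of the dyadic condition (equivalently, it follows once $\pi_1,\pi_2$ are known to be permutations), not a consequence of bijectivity of $\pi$ alone; bijectivity only forces the two preimages to have distinct parities, which is where the factor $2^{2^{m-1}}$ comes from. The trade-off: the paper's decoding is shorter and constructs $\varphi^{-1}$ explicitly, while your route requires the extra recursive structure lemma but yields the enumeration $|\dwd{m}| = 2^{m2^{m-1}}$ independently of the isomorphism (a count the paper obtains only as a byproduct, and which matches the $(0,m,2)$-net count cited in \S\ref{ss:lowdisc}).
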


\begin{proof}
We split the proof in three steps. 
\begin{enumerate}
    \item $\varphi$ is injective. 

Let $\pi \in \dwd{m}$ and $a \in [2^{m}]$.
We show that $\pi(a)$ can be recovered uniquely from $\varphi(\pi)$.

Let $C_1$ denote the column indexed by $a$.
Consider the unique complementary $(m,1)$-block $(S_1,T_1)$ containing $C_1$.
The value $\varphi(\pi)(S_1,T_1)$ specifies which of the two sub-columns of height $2^{m-1}$ cannot contain $\pi(a)$.
Thus, this step \emph{blocks} exactly $2^{m-1}$ possible rows in $C_1$.
Let $C_2$ be the remaining (unblocked) sub-column, which has height $2^{m-1}$.

Next, consider the unique complementary $(m-1,2)$-block $(S_2,T_2)$ containing $C_2$.
The value $\varphi(\pi)(S_2,T_2)$ again blocks half of $C_2$, that is, a sub-column of $2^{m-2}$ rows.
Let $C_3$ be the remaining unblocked sub-column, now of height $2^{m-2}$.

Continuing in this way, at the $k$-th step we block $2^{m-k}$ rows.
After $m$ steps, the total number of blocked rows is
\begin{equation}
  2^{m-1} + 2^{m-2} + \cdots + 2^{1} + 2^{0} \;=\; 2^{m} - 1.
\end{equation}
Since the entire column $C_1$ contains $2^{m}$ rows, blocking $2^{m} - 1$ of them leaves exactly one row unblocked, which must be the position of $\pi(a)$.

Therefore, $\pi(a)$ is uniquely determined by $\varphi(\pi)$ for every $a \in [2^{m}]$, and hence $\varphi$ is injective.

\item $\varphi$ is surjective. 

We use the identification $S_2\leftrightarrow\Z/2\Z$ as in the proof of the previous lemma. 

Let $f: \comI{m}\rightarrow \Z/2\Z$ and  
\begin{equation}
    \mathcal{A}_f = \{ (S,T) \in \comI{m} \mid f(S,T)=1   \}.
\end{equation}
We prove that any $f$ has a pre-image by induction on $|\mathcal{A}_f|$. 

If $|\mathcal{A}_f| = 0$, then $f = f_e = f_0$. 
Thus Lemma~\ref{lem:min-max} gives $\varphi(x_m) = f$. 
This establishes the base case of the induction.

Now fix $f$ such that $|\mathcal{A}_f| > 0$. 
Let $(S,T) \in \mathcal{A}_f$ and set $\mathcal{A}' = \mathcal{A}_f \setminus \{(S,T)\}$. 
Define $g : \comI{m} \to \mathbb{Z}/2\mathbb{Z}$ to be the function that takes the value $1$ on $\mathcal{A}'$ and $0$ elsewhere. 
By the induction hypothesis, there exists $\pi_g \in \dwd{m}$ such that $\varphi(\pi_g) = g$. 
It follows from Lemma~\ref{lem: Jordan} that 
\[
\varphi\bigl(Fl_{(S,T)}(\pi_g)\bigr) = f.
\]
This completes the induction.

\item $\varphi$ is an isomorphism of posets. 

Having already proved that $\varphi$ is bijective, it remains to show that $\varphi$ preserves the order in both directions.
One direction is established in Lemma \ref{lem:order-preserving}.
Thus, it suffices to prove that
\[
\varphi(\pi) \leq \varphi(\pi') \;\implies\; \pi \leq \pi',
\qquad \text{for } \pi, \pi' \in \dwd{m}.
\]
The key observation is that $\varphi(\pi) \leq \varphi(\pi')$ implies
$\mathcal{A}_{\varphi(\pi)} \subset \mathcal{A}_{\varphi(\pi')}$.
A straightforward inductive argument on 
$\bigl|\mathcal{A}_{\varphi(\pi')} \setminus \mathcal{A}_{\varphi(\pi)}\bigr|$,
together with Items \ref{DOS} and \ref{TRES} in Lemma\ref{lem: Jordan},
shows that $\pi'$ can be obtained from $\pi$ through a sequence of
length-increasing right multiplications by transpositions.
Therefore, $\pi \leq \pi'$, as desired.
\end{enumerate}
\end{proof}

\begin{theorem} \label{thm: main}
For all $m\geq 1$ we have: 
\begin{enumerate}
    \item  $\dwd{m} = [x_m,y_m]$.
    \item  $[x_m,y_m] \simeq S_2^{\comI{m}}$. 
    \item The Bruhat interval $[x_m,y_m]$ is a hypercube of  rank $m2^{m-1}$. 
\end{enumerate}   
\end{theorem}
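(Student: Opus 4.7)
The plan is to stitch together the results already established in the section and let the three parts fall out in a definite order: first part (2), then part (1), and finally part (3). Proposition \ref{prop: phi is iso} already provides a poset isomorphism $\varphi \colon \dwd{m} \xrightarrow{\sim} S_2^{\comI{m}}$, so the only nontrivial work left is to identify the set $\dwd{m}$ with the Bruhat interval $[x_m,y_m]$.

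For part (1), one inclusion is exactly Lemma \ref{lem: one-inclusion}. For the reverse inclusion $\dwd{m} \subseteq [x_m,y_m]$, I will invoke Lemma \ref{lem:min-max}, which says that $\varphi(x_m) = f_e$ and $\varphi(y_m) = f_s$ are the minimum and maximum of $S_2^{\comI{m}}$ respectively. For any $\pi \in \dwd{m}$, we trivially have $f_e \leq \varphi(\pi) \leq f_s$; since Proposition \ref{prop: phi is iso} guarantees that $\varphi$ is order-preserving in both directions, this transports to $x_m \leq \pi \leq y_m$, so $\pi \in [x_m,y_m]$. Part (2) is then immediate by combining part (1) with Proposition \ref{prop: phi is iso}.

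For part (3), it suffices, via part (2), to show that the poset $S_2^{\comI{m}}$ is a hypercube of rank $m 2^{m-1}$. The fact that it is a hypercube of rank $|\comI{m}|$ follows immediately from its definition as a product of copies of $S_2$ (one per element of $\comI{m}$) with the coordinatewise order. It remains to count complementary pairs: fixing $k_1 \in \{1,\ldots,m\}$ (and hence $k_2 = m+1-k_1$), there are $2^{m-k_1}$ basic $k_1$-intervals and $2^{m-k_2} = 2^{k_1 - 1}$ basic $k_2$-intervals, contributing $2^{m-k_1} \cdot 2^{k_1-1} = 2^{m-1}$ complementary pairs. Summing over the $m$ admissible values of $k_1$ gives $|\comI{m}| = m \cdot 2^{m-1}$, completing the proof.

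There is no genuine obstacle in this final argument; all of the substantive combinatorics (the flip lemma, order-preservation, bijectivity of $\varphi$, identification of the extremes) has already been carried out, and this theorem serves primarily as a bookkeeping statement collecting those results. The hardest conceptual step was hidden inside Proposition \ref{prop: phi is iso}, where one had to show that $\varphi$ reflects the Bruhat order—a fact which, once available, immediately forces $\dwd{m}$ to coincide with the interval $[x_m,y_m]$ because the hypercube $S_2^{\comI{m}}$ has a unique minimum and maximum.
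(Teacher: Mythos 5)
Your proposal is correct and follows essentially the same route as the paper: one inclusion from Lemma \ref{lem: one-inclusion}, the reverse inclusion by transporting $f_e \leq \varphi(\pi) \leq f_s$ through Lemmas \ref{lem:min-max} and Proposition \ref{prop: phi is iso}, and then parts (2) and (3) as immediate consequences. The only addition is your explicit count $|\comI{m}| = m2^{m-1}$, which the paper states without proof and which you verify correctly.
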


\begin{proof}
 By Lemma \ref{lem: one-inclusion} to show the first claim  it is enough to prove the inclusion $\dwd{m} \subset [x_m,y_m]$.  
 Let $\pi \in \dwd{m}$. We have $f_e\leq \varphi (\pi) \leq f_s$. 
 By Lemma \ref{lem:min-max} and Proposition \ref{prop: phi is iso} we obtain $x_m \leq \pi \leq y_m$. This is $\pi \in [x_m,y_m]$. 
 This finishes the proof of the first claim. 
The second claim follows by combining Proposition \ref{prop: phi is iso} and the first claim. 
Finally, the third claim is a direct consequence of the second once we recall that $|\comI{m}|=m2^{m-1}$. 
\end{proof}

\section{Connections, extensions and applications}

In this section we point out some connections with the existing literature, explain a $p$-adic generalization, and give some applications to cluster algebras and moduli of embeddings of Bruhat graphs.

\subsection{Low-discrepancy sequences}
\label{ss:lowdisc}

The sequence
\begin{equation*}
(0, 8, 4, 12, 2, 10, 6, 14, 1, 9, 5, 13, 3, 11, 7, 15)
\end{equation*}
appearing in Example~\ref{exm4} is known (after dividing by 16 and removing the initial $0$) as the {\em van der Corput sequence} with denominator $16$ in $[0,1]$.  It is a well-known example of a low-discrepancy or quasirandom sequence in the unit interval.

The dyadically well-distributed permutations also have an incarnation in the world of low-discrepancy sequences.  If $\pi$ is a permutation in $\dwd{m}$, we may think of its permutation matrix as a subset of $2^m$ points $(i/2^m,\pi(i)/2^m)$ in the unit square.  A size-$2^m$ subset of the square, like this one, such that each fundamental block contains exactly one of the points is called a {\em $(0,m,2)$-net}.  This is a special case of the more general notion of $(t,m,s)$-net defined by Niederreiter in \cite{niederreiter}.  These nets are especially well-distributed finite subsets of $[0,1]^s$ which have proven to be widely useful for quasirandom number generation and Monte Carlo integration, for instance in mathematical finance (~\cite{boyle1997monte}.)

It is not hard to show that the set of $(0,m,2)$-nets (up to the obvious notion of equivalence, where a net is labeled by the set of $(1/2^m) \times (1/2^m)$ squares it intersects) is in bijection with $\dwd{m}$.  In this context, the fact that there are exactly $2^{m2^{m-1}}$ inequivalent $(0,m,2)$-nets (part of Proposition ~\ref{prop: phi is iso}) is proved in~\cite{xiao}.

As in Remark \ref{rem: x and y as permutation matrices}, we may think of the affine linear group $\mathrm{GL}(\mathbb{F}_2^m) \ltimes \mathbb{F}_2^m$ as a subgroup of $S_{2^m}$.  Write $B$ for the subgroup of upper triangular matrices in $\mathrm{GL}(\mathbb{F}_2^m)$ and $G$ for $B \ltimes \mathbb{F}_2^m$.  Then $G$ permutes the basic intervals in $[2^m]$, which means that $\dwd{m}$ is preserved under both left and right multiplication by $G$.  In particular, this means that the double coset $B x_m B$ is contained in $\dwd{m}$.  Note that $x_m$, considered as an element of  $\mathrm{GL}(\mathbb{F}_2^m)$, is a permutation matrix, corresponding to the reversal in $S_m$; so $B x_m B$ is the large cell in the Bruhat decomposition of $\mathrm{GL}(\mathbb{F}_2^m)$.  This very natural construction was also known in the low-discrepancy sequence setting: the $(0,m,2)$-nets corresponding to $B x_m B$ are called the {\em digital} $(0,m,2)$-nets. Indeed, one finds in that literature the condition that a matrix has all principal minors nonsingular (see for instance Corollary 4.54 of \cite{dick2010digital}) but the fact that this condition picks out the large Bruhat cell does not seem to have been noticed, just as the fact that the $(0,m,2)$-nets form a Bruhat interval (as we show in Theorem~\ref{thm: main}) has not been noticed.  It would be interesting to mine the literature on low-discrepancy sequences to see if there are yet more constructions there which have unexpected connections to algebraic combinatorics.

\subsection{Powers of permutahedra}
\label{rmk: higher-t}
   The arguments above generalize as follows. For each $t, m > 0$, there is a set of $t$-adically well-distributed permutations $\dwd{m}^{(t)} \subseteq S_{t^m}$, defined by imitating Definition \ref{def: dwd}. Note, in particular, that $\dwd{1}^{(t)} = S_t$. Further examples of these permutations are given by  $x_m^{(t)}, y_m^{(t)} \in S_{t^{m}}$, which are the affine transformations of $(\Z/t\Z)^m$ given by
    \[
    x_m^{(t)}(a_0, \dots, a_{m-1}) = (a_{m-1}, \dots, a_0); \qquad y_m^{(t)}(a_0, \dots, a_{m-1}) = (t-1-a_{m-1}, \dots, t-1-a_0).
    \]
    Note that $x_m^{(t)}$ and $y_m^{(t)}$ are involutions, and $x_m^{(t)}y_m^{(t)} = y_m^{(t)}x_m^{(t)}$ is the longest element in $S_{t^m}$. Definition \ref{def: order-iso} generalizes to give a map $\varphi: \dwd{m}^{(t)} \to S_t^{mt^{m-1}}$. Using this one can show that
    \[
    [x_m^{(t)}, y_m^{(t)}] = \dwd{m}^{(t)} \cong S_t^{mt^{m-1}},
    \]
    where the last isomorphism is that of posets with the Bruhat order.

    For example, to construct a $3$-adically well-distributed permutation in $S_9$ we start with the following configuration:

\begin{center}
\begin{tikzpicture}[scale=0.4]
\draw[step=1.0, color=lightgray] (0,0) grid (9,9);

\draw[step=3.0, very thick] (0,0) grid (9,9);

\node at (1.5,1.5) {\Huge 1};
\node at (4.5,1.5) {\Huge 1};
\node at (7.5,1.5) {\Huge 1};

\node at (1.5,4.5) {\Huge 1};
\node at (4.5,4.5) {\Huge 1};
\node at (7.5,4.5) {\Huge 1};

\node at (1.5,7.5) {\Huge 1};
\node at (4.5,7.5) {\Huge 1};
\node at (7.5,7.5) {\Huge 1};

\node at (1.5, -0.5) { \color{blue} $\sigma_1$};
\node at (4.5, -0.5) { \color{blue} $\sigma_2$};
\node at (7.5, -0.5) { \color{blue} $\sigma_3$};

\node at (9.75, 1.5) { \color{red} $\tau_3$};
\node at (9.75, 4.5) { \color{red} $\tau_2$};
\node at (9.75, 7.5) { \color{red} $\tau_1$};
\end{tikzpicture}
\end{center}

\noindent where the large $1$'s indicate that there is a unique entry equal to $1$ in the corresponding $3 \times 3$-block, and $\sigma_{i}, \tau_{i} \in S_3$ are permutations that indicate the relative positions of the $1$'s in the corresponding column and row, respectively. This gives us the bijection between the $3$-adically well-distributed permutations in $S_9$ and $S_3^{6}$. To give a more concrete example:
\begin{center}
\begin{tikzpicture}[scale=0.5]
\draw[step=1.0, color=lightgray] (0,0) grid (9,9);

\draw[step=3.0, very thick] (0,0) grid (9,9);

\node at (1.5, 7.5) {1};
\node at (3.5, 6.5) {1};
\node at (6.5, 8.5) {1};

\node at (0.5, 4.5) {1};
\node at (5.5, 5.5) {1};
\node at (7.5, 3.5) {1};

\node at (2.5, 0.5) {1};
\node at (4.5, 1.5) {1};
\node at (8.5, 2.5) {1};

\node at (1.5, -0.5) { \color{blue} $[2,1,3]$};
\node at (4.5, -0.5) { \color{blue} $[1,3,2]$};
\node at (7.5, -0.5) { \color{blue} $[1,2,3]$};

\node at (10.2, 1.5) { \color{red} $[3,2,1]$};
\node at (10.2, 4.5) { \color{red} $[2,1,3]$};
\node at (10.2, 7.5) { \color{red} $[2,3,1]$};
\end{tikzpicture}
\end{center}

Changing the last column from the identity $[1,2,3]$ to $[2,1,3]$ results in the following matrix. 

\begin{center}
\begin{tikzpicture}[scale=0.5]
\draw[step=1.0, color=lightgray] (0,0) grid (9,9);

\draw[step=3.0, very thick] (0,0) grid (9,9);

\node at (1.5, 7.5) {1};
\node at (3.5, 6.5) {1};
\node at (7.5, 8.5) {1};

\node at (0.5, 4.5) {1};
\node at (5.5, 5.5) {1};
\node at (6.5, 3.5) {1};

\node at (2.5, 0.5) {1};
\node at (4.5, 1.5) {1};
\node at (8.5, 2.5) {1};

\node at (1.5, -0.5) { \color{blue} $[2,1,3]$};
\node at (4.5, -0.5) { \color{blue} $[1,3,2]$};
\node at (7.5, -0.5) { \color{blue} $[2,1,3]$};

\node at (10.2, 1.5) { \color{red} $[3,2,1]$};
\node at (10.2, 4.5) { \color{red} $[2,1,3]$};
\node at (10.2, 7.5) { \color{red} $[2,3,1]$};
\end{tikzpicture}
\end{center}
and we see that we have a covering relation in the Bruhat order.

\begin{remark}
    Note that a usual Sudoku puzzle is a collection $(\sigma_1, \dots, \sigma_9)$ of $3$-adically well-distributed permutations in $S_9$, satisfying the condition that $\sigma_i\sigma_j^{-1}$ does not have fixed points if $i \neq j$. From this viewpoint, we can define generalized Sudoku puzzles as a collection of $t^m$ $t$-adically well-distributed permutations $(\sigma_1, \dots, \sigma_{t^{m}})$ in $S_{t^{m}}$ such that $\sigma_i\sigma_j^{-1}$ does not have fixed points if $i \neq j$. For example, the following is a $2$-adic Sudoku puzzle of elements in $S_8$. 
    
\begin{center}
    \begin{tikzpicture}[scale=0.5]
\foreach \x in {0,...,8}
\draw (0, \x) -- (8, \x);
\foreach \x in {0,...,8}
\draw (\x, 0) to (\x, 8);



\node at (0.5, 0.5) {\small 7};
\node at (1.5, 0.5) {\small 8};
\node at (2.5, 0.5) {\small 2};
\node at (3.5, 0.5) {\small 3};
\node at (4.5, 0.5) {\small 4};
\node at (5.5, 0.5) {\small 1};
\node at (6.5, 0.5) {\small 6};
\node at (7.5, 0.5) {\small 5};

\node at (0.5, 1.5) {\small 6};
\node at (1.5, 1.5) {\small 5};
\node at (2.5, 1.5) {\small 4};
\node at (3.5, 1.5) {\small 1};
\node at (4.5, 1.5) {\small 7};
\node at (5.5, 1.5) {\small 8};
\node at (6.5, 1.5) {\small 3};
\node at (7.5, 1.5) {\small 2};

\node at (0.5, 2.5) {\small 3};
\node at (1.5, 2.5) {\small 2};
\node at (2.5, 2.5) {\small 7};
\node at (3.5, 2.5) {\small 8};
\node at (4.5, 2.5) {\small 6};
\node at (5.5, 2.5) {\small 5};
\node at (6.5, 2.5) {\small 4};
\node at (7.5, 2.5) {\small 1};

\node at (0.5, 3.5) {\small 4};
\node at (1.5, 3.5) {\small 1};
\node at (2.5, 3.5) {\small 6};
\node at (3.5, 3.5) {\small 5};
\node at (4.5, 3.5) {\small 3};
\node at (5.5, 3.5) {\small 2};
\node at (6.5, 3.5) {\small 7};
\node at (7.5, 3.5) {\small 8};

\node at (0.5, 4.5) {\small 8};
\node at (1.5, 4.5) {\small 7};
\node at (2.5, 4.5) {\small 3};
\node at (3.5, 4.5) {\small 2};
\node at (4.5, 4.5) {\small 1};
\node at (5.5, 4.5) {\small 4};
\node at (6.5, 4.5) {\small 5};
\node at (7.5, 4.5) {\small 6};

\node at (0.5, 5.5) {\small 5};
\node at (1.5, 5.5) {\small 6};
\node at (2.5, 5.5) {\small 1};
\node at (3.5, 5.5) {\small 4};
\node at (4.5, 5.5) {\small 8};
\node at (5.5, 5.5) {\small 7};
\node at (6.5, 5.5) {\small 2};
\node at (7.5, 5.5) {\small 3};

\node at (0.5, 6.5) {\small 2};
\node at (1.5, 6.5) {\small 3};
\node at (2.5, 6.5) {\small 8};
\node at (3.5, 6.5) {\small 7};
\node at (4.5, 6.5) {\small 5};
\node at (5.5, 6.5) {\small 6};
\node at (6.5, 6.5) {\small 1};
\node at (7.5, 6.5) {\small 4};

\node at (0.5, 7.5) {\small 1};
\node at (1.5, 7.5) {\small 4};
\node at (2.5, 7.5) {\small 5};
\node at (3.5, 7.5) {\small 6};
\node at (4.5, 7.5) {\small 2};
\node at (5.5, 7.5) {\small 3};
\node at (6.5, 7.5) {\small 8};
\node at (7.5, 7.5) {\small 7};
\end{tikzpicture}
\end{center}
\end{remark}

\subsection{Kazhdan-Lusztig $d$-invariant}
\label{sec:R polynomials}

We define the $d$-Kazhdan-Lusztig invariant mentioned in the introduction.
It arises as the absolute value of the coefficient of the second largest power in a Kazhdan-Lusztig $R$-polynomial, which we recall its definition now for permutations although they can be defined for arbitrary Coxeter systems. 

\begin{definition}
There is a unique family of polynomials
$\{R_{u,v}(q)\}_{u,v\in S_n}\subseteq \mathbb{Z}[q]$
satisfying the following conditions:
\begin{enumerate}
\item $R_{u,v}(q)=0$ if $u\not\le v$;
\item $R_{u,v}(q)=1$ if $u=v$;
\item For $1\leq i <n$, let $s_{i}= (i ,\,i+1)$.

If $\ell (vs_i) <\ell(v)$ then
\begin{equation}
R_{u,v}(q)=
\begin{cases}
R_{us_i,vs_i}(q), & \text{if } vs_i <u,\\[2pt]
q R_{us_i,vs_i}(q)+(q-1)R_{u,vs_i}(q), & \text{if } vs_i >v.
\end{cases}
\end{equation}
\end{enumerate}
\end{definition}
We can now define the $d$-invariant.
\begin{definition}
For $x, y \in S_n$ we introduce $d_{x,y}\in \mathbb{Z}$ by
\begin{equation}
R_{x,y}(q) = q^{\ell(y)-\ell(x)} - d_{x,y}, q^{\ell(y)-\ell(x)-1} + \text{ lower degree terms}.
\end{equation}
\end{definition}
We have the following recursion to compute $d_{x,y}$.
\begin{enumerate}
\item $d_{x,x}=0$ for any $x\in W$;
\item for any $x\le y$ and any simple transposition $s_i$ such that $y s_i<y$ we have
\end{enumerate}
\begin{equation}\label{eq: computing d-invariant}
d_{x,y}=
\begin{cases}
d_{xs_i,ys_i}, & \text{if } x s_i<x,\\[2pt]
d_{x,y s_i}+1, & \text{if } x s_i>x \text{ and } x s_i \nleq y s_i,\\[2pt]
d_{x,y s_i}, & \text{if } x s_i>x \text{ and } x s_i \le y s_i.
\end{cases}
\end{equation}
\begin{cor} \label{cor: d-invariant is superlinear}
Let $m\geq 1$.
Let $x,y\in [x_m,y_{m}]$ be such that $x\leq y$. 
Then, we have
\begin{equation} \label{eq: R when the interval is a cube}
R_{x,y}(q)=(q-1)^{\ell(y)-\ell(x)}.
\end{equation}
In particular, $R_{x_m,y_m}(q)= (q-1)^{m2^{m-1}}$ and $d_{x_m,y_m}=m2^{m-1}$.
\end{cor}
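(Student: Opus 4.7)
The plan is to recognize this corollary as an instance of a classical fact: for any Bruhat interval $[u, v]$ that is poset-isomorphic to a Boolean lattice of rank $d$, the $R$-polynomial is $R_{u, v}(q) = (q-1)^d$. Given this fact, the corollary follows because Theorem~\ref{thm: main} and Proposition~\ref{prop: phi is iso} together imply that every sub-interval $[x, y] \subseteq [x_m, y_m]$ (with $x, y \in \dwd{m}$ and $x \leq y$) is a Bruhat interval isomorphic to a hypercube of rank $\ell(y) - \ell(x)$: the restriction of $\varphi$ identifies $[x, y]$ with the Boolean sub-lattice $[\varphi(x), \varphi(y)] \subseteq S_2^{\comI{m}}$.

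I would justify the classical fact via the Carrell--Peterson criterion: the identity $R_{u, v}(q) = (q-1)^{\ell(v) - \ell(u)}$ is equivalent to the palindromicity of the rank-generating function $\sum_{z \in [u, v]} q^{\ell(z) - \ell(u)}$, and a hypercube of rank $d$ has rank-generating function $(1+q)^d$, which is manifestly palindromic.

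A self-contained alternative would proceed by induction on $d$ using the standard $R$-polynomial recursion on a simple right descent $s$ of $v$. The key input, to be extracted from Lemma~\ref{lem: Jordan} and Proposition~\ref{prop: phi is iso}, is that when $vs$ also lies in $\dwd{m}$, right multiplication by $s$ corresponds to flipping a single coordinate of $\varphi$. In the case $us > u$, the vectors $\varphi(us)$ and $\varphi(vs)$ then disagree at this flipped coordinate in opposite directions, forcing $us \not\leq vs$ and hence $R_{us, vs}(q) = 0$; the recursion collapses to $R_{u, v}(q) = (q-1) R_{u, vs}(q) = (q-1)^d$ by the inductive hypothesis applied to $[u, vs]$. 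The main obstacle in this route is verifying both the existence of a simple right descent $s$ of $v$ with $vs \in \dwd{m}$ and the control of the $us < u$ case of the recursion; the Carrell--Peterson path sidesteps this entirely.

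With \eqref{eq: R when the interval is a cube} in hand, the final assertions follow by specialization: Lemma~\ref{xdwd}(4) gives $\ell(y_m) - \ell(x_m) = m 2^{m-1}$, hence $R_{x_m, y_m}(q) = (q-1)^{m 2^{m-1}}$; expanding, the coefficient of $q^{m 2^{m-1} - 1}$ is $-m 2^{m-1}$, so $d_{x_m, y_m} = m 2^{m-1}$ by the definition of the $d$-invariant.
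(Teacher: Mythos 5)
There is a genuine gap: the ``classical fact'' on which your main route rests is false as stated. The Carrell--Peterson criterion is a statement about Kazhdan--Lusztig $P$-polynomials (rational smoothness), not about $R$-polynomials, and the implication you actually need --- palindromicity of the rank-generating function of $[u,v]$ implies $R_{u,v}(q)=(q-1)^{\ell(v)-\ell(u)}$ --- fails already in $S_3$: for the full interval $[e,w_0]$ the rank-generating function is $1+2q+2q^2+q^3$, which is palindromic, yet $R_{e,w_0}(q)=(q-1)(q^2-q+1)=q^3-2q^2+2q-1\neq (q-1)^3$; equivalently $d_{e,w_0}=2$, in agreement with $f(3)=2$ in Table \ref{tab:values f(n)} (if your criterion were true, $f(3)$ would be $3$). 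The correct statement, and the one the paper invokes, is Brenti's characterization \cite[Theorem 6.3]{brenti1994combinatorial}: $R_{u,v}(q)=(q-1)^{\ell(v)-\ell(u)}$ exactly when $[u,v]$ contains no subinterval isomorphic to the Bruhat order of $S_3$. This applies here because every subinterval of a Boolean lattice is Boolean, while the $S_3$ interval (rank sizes $1,2,2,1$) is not; so being a hypercube does suffice, but for a different reason than the one you give.

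Your fallback induction does not repair this, and you flag the problem yourself: the $R$-polynomial recursion requires a \emph{simple} right descent $s$ of $v$, and there is no reason that $vs$ remains in $\dwd{m}$ --- the covers inside the hypercube are the flips of Lemma \ref{lem: Jordan}, i.e.\ right multiplications by transpositions $(P_S\star_1,P_S\star_2)$ that are in general not adjacent transpositions --- so $[u,vs]$ and $[us,vs]$ typically leave the interval and the inductive hypothesis is unavailable; the $us<u$ branch is likewise uncontrolled. The portions of your argument that do match the paper are fine: Theorem \ref{thm: main} together with Proposition \ref{prop: phi is iso} makes every subinterval $[x,y]\subseteq[x_m,y_m]$ a hypercube of rank $\ell(y)-\ell(x)$, and Lemma \ref{xdwd} then yields $R_{x_m,y_m}(q)=(q-1)^{m2^{m-1}}$ and $d_{x_m,y_m}=m2^{m-1}$ once \eqref{eq: R when the interval is a cube} is known. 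What is missing is a valid justification of \eqref{eq: R when the interval is a cube} for Boolean Bruhat intervals; citing Brenti's theorem (or proving that special case directly) is the needed fix.
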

\begin{proof}
Theorem \ref{thm: main} implies that the Bruhat interval $[x,y]$ is a hypercube.
In particular, $[x,y]$ does not contain any interval isomorphic to $S_3$.
Therefore, \eqref{eq: R when the interval is a cube} follows by a direct application of \cite[Theorem 6.3]{brenti1994combinatorial}.
Finally, by Lemma \ref{xdwd} we have $\ell(y_m)-\ell(x_m)=m2^{m-1}$, and the last claim follows.
\end{proof}

We use the name $d$-invariant because of the work of Patimo \cite{patimo2021combinatorial}, who showed that this value is a combinatorial invariant of Bruhat intervals. 
Namely, it depends only on the unlabeled Hasse diagram of the interval and not on its endpoints. 
Building on this fact, he proved the Combinatorial Invariance Conjecture of Lusztig and Dyer for the $q$-coefficient of a Kazhdan--Lusztig polynomial.

The main ingredient in his proof is a combinatorial formula for computing $d_{x,y}$, namely
\begin{equation} \label{eq: d-combinatorial}
    d_{x,y}= \min\{\, |F| \mid F \subset E_{[x,y]} \text{ and } F^{\diamond} = E_{[x,y]} \,\},
\end{equation}
where $E_{[x,y]}$ denotes the set of edges of the Hasse diagram of $[x,y]$, and $F^{\diamond}$ denotes the diamond-closure of $F$. For the precise definition of $F^{\diamond}$ we refer to \cite[Definition 4.5]{patimo2021combinatorial}. Roughly speaking, $F^{\diamond}\supset F $ is obtained as follows: whenever two adjacent edges of a diamond (i.e., a rank-two hypercube) already lie in $F^{\diamond}$, we add to $F^{\diamond}$ the remaining two adjacent edges of that diamond. We repeat this process until no further edges can be added.

In general, computing $d_{x,y}$ using \eqref{eq: d-combinatorial} is a difficult task. However, when $[x,y]$ is a hypercube, it is straightforward to verify that a minimal set $F$ satisfying the condition in \eqref{eq: d-combinatorial} is given by the set of edges incident to $x$. In this way we recover the formula $d_{x_m,y_m}=m2^{m-1}$ directly from the combinatorial definition of the $d$-invariant.

\subsection{Cluster varieties}

We say that an affine algebraic variety $X$ is a cluster variety if its coordinate algebra $\C[X]$ admits the structure of a cluster algebra, cf. \cite{FZ}.  This means that $X$ can be covered, up to codimension $2$, with algebraic tori, and each of these tori admits a coordinate system consisting of regular functions on $\C[X]$ (the \emph{cluster variables}) such that the transition functions between the tori can be codified into a combinatorial rule known as \emph{cluster mutation}. The cluster variables can be separated into \emph{mutable} and \emph{frozen}. The frozen variables are precisely those that are units in $\C[X]$, and appear as coordinates in every algebraic torus of the cluster structure on $X$. While an algebraic variety may admit many non-equivalent cluster structures, the number of frozen variables is an invariant of $X$, cf. \cite{GLS-factorial}. The existence of a cluster structure has many consequences for the geometry of $X$, for example, it implies the existence of a $\Z$-form with positive structure constants, \cite{GHKK}. 

Recently, it has been shown that open Richardson varieties in the flag variety admit cluster structures in \cite{CGGLSS} and, independently, in \cite{GLSBII}. By the main result of \cite{CGGSSBS}, the cluster structures constructed in both works coincide. The work \cite{GLSBII} uses Deodhar geometry \cite{Deodhar} to construct the cluster structure. In particular, the number of \emph{mutable} variables in a seed coincides with the number of codimension-$1$ Deodhar strata in the Richardson variety, and the total number of cluster variables is $\dim R^{\circ}_{x,y} = \ell(y) - \ell(x)$. Denoting the number of frozen variables by $f_{x,y}$ we have
\begin{equation}\label{eq: frozen-variables}
f_{x,y} = \ell(y) - \ell(x) - \#\mathfrak{D}^{1}_{x,y}
\end{equation}
where $\mathfrak{D}^{1}_{x,y}$ is the set of codimension-$1$ Deodhar strata. 

On the other hand, Deodhar in \cite[Theorem 1.3]{Deodhar} gives an expression for the $R$-polynomial in terms of the Deodhar stratification:
\[
R_{x,y}(q) = \sum_{\mathfrak{s}} (q-1)^{a(\mathfrak{s})}q^{b(\mathfrak{s})}.
\]
where $\mathfrak{s}$ runs over all Deodhar strata, and $a(\mathfrak{s}) + 2b(\mathfrak{s}) = \ell(y) - \ell(x)$. The quantity $b(\mathfrak{s})$ is precisely the codimension of the stratum $\mathfrak{s}$. There is a unique stratum $\mathfrak{s}_0$ with $b(\mathfrak{s}_0) = 0$. It follows that we can write
\[
R_{x,y}(q) = (q-1)^{\ell(y) - \ell(x)} + \#\mathfrak{D}^{1}_{x,y}(q-1)^{\ell(y) - \ell(x) -2}q + \mbox{ lower terms.}
\]
We obtain
\begin{equation}\label{eq: R-polynomial-Deodhar}
R_{x,y}(q) = q^{\ell(y) - \ell(x)} - (\ell(y) - \ell(x) - \#\mathfrak{D}^1_{x,y})q^{\ell(y) - \ell(x) - 1} + \mbox{ lower terms.}
\end{equation}
Comparing \eqref{eq: frozen-variables} and \eqref{eq: R-polynomial-Deodhar} we conclude that $f_{x,y} = d_{x,y}$, i.e., the number of frozen variables in the Richardson variety $R^{\circ}_{x,y}$ is precisely the $d$-invariant $d_{x,y}$.

Any cluster variety admits a faithful action of an algebraic abelian group, and if the cluster variety has \emph{really full rank}, this abelian group is a torus of rank equal to the number of frozen variables, see \cite[Section 5]{LamSpeyer} for details. In particular, open Richardson varieties have really full rank, so we have a faithful action
\begin{equation}\label{eq: cluster-action}
(\C^{\times})^{d_{x,y}} \curvearrowright R^{\circ}_{x,y}.
\end{equation}

One problem with the cluster-theoretic construction of this action is that it is non-explicit and its construction depends on intricate combinatorics (see e.g. \cite[Example 4.6]{Kim}) , so it would be desirable to have a more explicit form of it. Identifying $(\C^{\times})^{n-1}$ with the torus of diagonal matrices in $\mathrm{PGL}(n)$, it is easy to obtain a natural action of $(\C^{\times})^{n-1}$ on $R^{\circ}_{x,y}$. In many cases (for example, when $x = e$ (\cite[Section 4]{CGSS}) or when $y$ is a Grassmannian permutation (\cite{GL-positroid})), we have $d_{x,y} \leq n-1$ and the action \eqref{eq: cluster-action} is a quotient of the natural $(\C^{\times})^{n-1}$-action. When $d_{x,y} > n-1$, it is not clear how to obtain the action of the remaining $\C^{\times}$-factors in \eqref{eq: cluster-action}. In particular, it would be very interesting to obtain an explicit action of $(\C^{\times})^{\ell(y_n) - \ell(x_n)}$ on $R^{\circ}_{x_n, y_n}$ without using that $R^{\circ}_{x_n, y_n}$ is already isomorphic to a torus. Moreover, it would be interesting to know if the $(\C^{\times})^{\ell(y_n) - \ell(x_n)}$-action on the \emph{open} Richardson variety $R^{\circ}_{x_n,y_n}$ extends to the \emph{closed} Richardson variety $\overline{R}_{x_n,y_n}$. We remark that Theorem \ref{thm: main} implies, using \cite[Corollary 3.2]{Abe-Billey} and \cite[Corollary 1.3]{Knutson-Woo-Yong}, that $\overline{R}_{x_n, y_n}$ is smooth.

\subsection{Moduli of Bruhat interval embeddings}

The Bruhat graph of an interval $[x,y]$ has vertices consisting of the permutations in the interval and a directed edge $u \to v$ if $u \le v$ and $u =tv$ for some reflection $t = (i,j) \in S_n$, in which case the edge is labelled by $t$. One can recover the unlabelled Bruhat graph of an interval from the poset \cite{Dyer-BG}.

Bruhat graphs have natural embeddings. The symmetric group $S_n$ acts on $\mathbb{R}^n$ via permutation of coordinates. Let us fix $\rho = (0, 1, \dots, n-1)$ and send $u \in [x,y]$ to $u(\rho)$ and $u \to v$ to the straight line connecting $u(\rho)$ to $v(v)$. This embedding has a geometric meaning: it is the image of the $0-$ and $1$-dimensional torus orbits in the closed Richardson variety under the moment map  \cite{BradenMacPherson}. We will refer to this embedding as the \emph{geometric} embedding below, and stress that it is extra information.

 For several questions in combinatorics and Kazhdan-Lusztig theory, it is more natural to consider the Bruhat graph. One such example, is the combinatorial invariance conjecture of Dyer and Lusztig, which asserts that isomorphic intervals give rise to the same Kazhdan-Lusztig polynomial. It is known \cite{Dyer-BG, BradenMacPherson} that one can recover the Kazhdan-Lusztig polynomial from the Bruhat graph \emph{together} with its embedding.
 A crucial difficulty is that although two intervals may be isomorphic, their geometric embeddings might be very different. For example, the natural ambient spaces in which the embeddings live might not even be of the same dimension. This motivated Hone, Klein and the last author to consider a general class of embeddings of Bruhat intervals, of which the geoemtric embeddings discussed above are an example \cite{HoneKleinWilliamson}.

In order to explain this definition, we need to briefly recall reflection subgroups. In our setting of the symmetric group, these are subgroups generated by any subset of transpositions. Any such subgroup is isomorphic to a product of (possibly smaller) symmetric groups. We say that a reflection subgroup is of rank 2 if it is isomorphic to $S_2 \times S_2$ or $S_3$. One can detect the intersections of cosets of rank 2 reflection subgroups with any Bruhat interval. These are precisely the diamond complete subgraphs isomorphic to an arrow, diamond or a full $S_3$ subgraph (see Figure \ref{fig:rank2subgraphs}). We will refer to these as \emph{rank 2 subgraphs}.

The following definition (central to \cite{HoneKleinWilliamson}) attempts to abstract the  properties of the embeddings of Bruhat graphs that occur in nature:

\begin{definition}\label{def: embedding}
    Let $x, y \in S_n$. An embedding of the Bruhat graph $\phi : [x,y] \to \mathbb{R}^m$ is \emph{good} if:
    \begin{enumerate}
    \item The images of the vertices lie on a sphere $S^{m-1} \subset \mathbb{R}^m$;
    \item For all rank 2 subgraphs, the images of its points lie on a plane.
    \end{enumerate}
    \end{definition}

It is easy to see that the geometric embeddings considered above satisfy these properties. However, even simple Bruhat intervals may have many more good embeddings than occur in nature. For example, only a few geometric embeddings of diamonds occur in nature, whereas the moduli space of good embeddings inside $\mathbb{R}^2$ is an open set inside the moduli space of 4 points on a circle $S^1 \subset \mathbb{R}^2$. More generally, in the symmetric group the angles between edges in geometric embeddings always lie in the set $\pm \pi/3, \pm \pi/2$ and $\pm 2\pi/3$, whereas no such restrictions are made for good embeddings. Good embeddings appear to provide a natural larger space in which one could attempt to relate the geometric embeddings of isomorphic intervals.

For a fixed interval $[x,y]$, it is natural to ask: what is the largest possible dimension in which this interval admits a good embedding? (We always assume embeddings are such that their vertices are not contained in an affine hyperplane.) This quantity shares at least two  properties with the number of frozen variables discussed earlier: one might (falsely) guess that it is always bounded by $n-1$; and, it is trivially bounded by $\ell(y)  - \ell(x)$. It is plausible that these two numbers are in fact equal, however this is not known.
The setting simplifies dramatically for hypercubes. In this case this largest possible dimension is easily seen to be the dimension of the hypercube, and the moduli of all embeddings can be understood explicitly.

\begin{figure}
\[
\begin{tikzpicture}[xscale=1.4,yscale=0.9,>=stealth]
  \tikzstyle{vertex}=[circle, draw, fill=red!10, inner sep=2pt]

  \node[vertex] (B) at (-8,-1)  {};
  \node[vertex] (T) at (-8,0.5)   {};

  \draw[->,gray!70] (B) -- (T);

  \node[vertex] (LL) at (-7,0.5)  {};
  \node[vertex] (RL) at (-5,0.5)   {};
  \node[vertex] (T)  at (-6,1.5)   {};

  \draw[->,gray!70] (LL) -- (T);
  \draw[->,gray!70] (RL) -- (T);

  \node[vertex] (LL) at (-7,-1)  {};
  \node[vertex] (RL) at (-5,-1)   {};
  \node[vertex] (B)  at (-6,-2)   {};

  \draw[->,gray!70] (B)  -- (LL);
  \draw[->,gray!70] (B)  -- (RL);

  \node[vertex] (RU) at (-3, 1)   {};
  \node[vertex] (LL) at (-4,-1)  {};
  \node[vertex] (RL) at (-2,-1)   {};
  \node[vertex] (B)  at (-3,-2)   {};

  \draw[->,gray!70] (B)  -- (LL);
  \draw[->,gray!70] (B)  -- (RL);
  \draw[->,gray!70] (RL) -- (RU);
  \draw[->,gray!70] (LL) -- (RU);

  \node[vertex] (T)  at (0, 2)   {};
  \node[vertex] (LU) at (-1, 1)  {};
  \node[vertex] (RU) at (1, 1)   {};
  \node[vertex] (LL) at (-1,-1)  {};
  \node[vertex] (RL) at (1,-1)   {};
  \node[vertex] (B)  at (0,-2)   {};

  \draw[->,gray!70] (B)  -- (LL);
  \draw[->,gray!70] (B)  -- (RL);
  \draw[->,gray!70] (LL) -- (LU);
  \draw[->,gray!70] (RL) -- (RU);
  \draw[->,gray!70] (LU) -- (T);
  \draw[->,gray!70] (RU) -- (T);

  \draw[->,gray!70] (B)  -- (T);
  \draw[->,gray!70] (LL) -- (RU);
  \draw[->,gray!70] (RL) -- (LU);
\end{tikzpicture}
\]
\caption{Rank two subgraphs of Bruhat intervals for symmetric groups: the arrow, incomplete diamonds, diamond and full $S_3$.}
\label{fig:rank2subgraphs}
\end{figure}
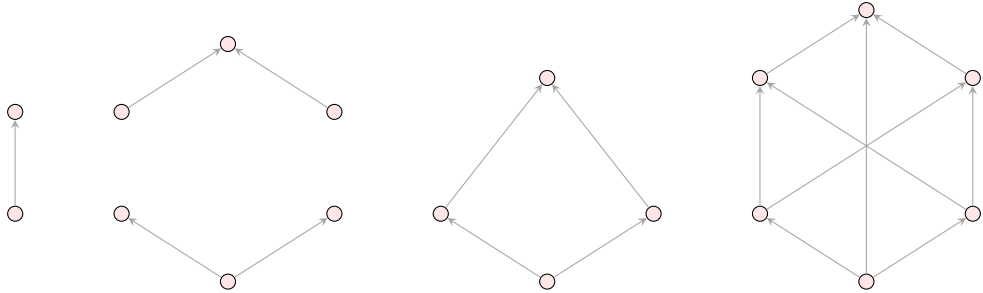

\section{Experiments with FunSearch and AlphaEvolve}
\label{sec:experiments}

The main theorems of the present paper were suggested to us by interesting examples that were discovered computationally.  
Since the computational methods employed here (genetic algorithms using large language models as a mode of reproduction) are novel, we will devote slightly more time than is customary to describing the course of experimentation in a fair amount of narrative detail, in the hope that this discussion will be useful to other mathematicians interested in employing these methods.

The initial problem we considered was that of finding pairs of permutations $x,y$ in $S_n$ whose $d$-invariant $d_{x,y}$ is as large as possible. 

This problem has two features which, together, have tended to indicate promising contexts for exploration using machine learning methods:
\begin{itemize}
\item The search space (pairs of permutations) has size $(n!)^2$, too large to search exhaustively for any but the very smallest $n$.
\item The function $d_{x,y}$ to be maximized can be computed quickly and reliably using \eqref{eq: computing d-invariant}. 
\end{itemize}

Other problems with these features that have been subjects for machine learning experiments of this kind include:
\begin{enumerate}
    \item {\bf{The cap set problem}}. 
    
    Finding large subsets of $(\Z/3\Z)^n$ with no three-term arithmetic progressions.
    \item {\bf{No-Isosceles Problem}}.
    
    Finding large subsets of the $n \times n$ grid with no isosceles triangles.
    \item {\bf{The extremal number of $C_4$}}.
    
    Finding graphs on $n$ vertices with as many edges as possible and no $4$-cycles.
\end{enumerate}

 Note that the search spaces for these problems have size exponential in $n^2$ or even doubly exponential. 
 The search space treated here is, in some sense, relatively small among those that are too big. 
 This may help explain the relatively successful performance observed in the experiments reported here.

Let $f: \mathbb{N} \rightarrow \mathbb{N}$ be the function given by
\begin{equation}
    f(n) = \max \{  d_{x,y }  \mid x,y\in S_n \}.
\end{equation}
It is quite easy to show that $f(n)$ is at least within a constant of $n$. 
For instance, we have $d_{e,w_0}=n-1$. 
Exhaustive computation for $n \leq 7$ yields

\begin{table}[h]
    \centering
    \begin{tabular}{|c|c|c|c|c|c|c|c|} \hline
$n$     & 1 & 2 & 3 & 4 & 5 & 6 & 7 \\
\hline
$f(n)$  &0  & 1 & 2 & 4 & 5 & 7 & 9 \\ \hline
    \end{tabular}
    \caption{Small values of $f(n)$}
    \label{tab:values f(n)}
\end{table}

We found a pair of permutations in $S_8$ with $d$-invariant $12$, but were not sure whether this value was maximal. 
Indeed, for each $n$ we were able to construct a pair of permutations $(x,y)$ such that $d_{x,y} = 2n-4$ if $n$ is even and $2n-5$ if $n$ is odd.  
These are formed by a construction that is easy to describe:
for instance, when $n=12$, a pair of permutations with $d_{x,y} = 20$ is given by 
\begin{equation}
        x=  (2,5)(4,7)(6,9)(8,11)   \qquad \mbox{and} \qquad   y =(1,12)(2,4)(3,6)(5,8)(7,10)(9,11).
\end{equation}

Our goal as we began our experiments was to understand whether this lower bound was best possible.  As we record in \ref{cor: d-invariant is superlinear}, the answer turned out to be no.

\subsection{FunSearch experiments}

FunSearch is a protocol developed by Google DeepMind in 2023 and described in \cite{funsearch}. 
FunSearch is open-source and various versions are available.
We used the implementation developed by Christofero Fraser-Taliente, which we have found particularly mathematician-friendly, and which is publicly available at \url{https://github.com/kitft/funsearch}.

We recall the main idea.  Our goal is to find two permutations $x,y$ such that $d_{x,y}$ is large.  FunSearch, rather than searching for such a pair directly, searches for a Python program whose output is such a pair.

The main loop in the mechanism looks something like this.  Here, $M$ and $m$ are constants which matter only insofar as $M \gg m$.
\begin{enumerate}
\item 
Suppose given a large number $M$ of Python programs. \label{initial}
\item Run them all.  Discard all those whose output is not a pair of permutations.  Among those that remain, choose the $m$ programs whose output has the largest value of $d_{x,y}$.
\item Send various subsets of these $m$ as input to a large language model, together with a prompt asking the model to output $M$ programs which resemble the ones in the input.
\item Return to step \ref{initial}.
\end{enumerate}

At the initial step, the $M$ programs can be taken to be $M$ copies of an initial program chosen by the user.  In these experiments, we typically initialized with the program that takes an integer $n$ as input and outputs two copies of the identity permutation in $S_n$.

The permutation-producing programs that evolve as FunSearch runs are, for historical reasons, called \texttt{priority}.  
Importantly, the evaluation function (in this case, the function that computes $d_{x,y}$ given input permutations $x$ and $y$) is frozen in place; we supply it to FunSearch at the beginning and it does not evolve. 
This gives the user quite a bit of power and flexibility in guiding the evolution of \texttt{priority} towards whatever objective is desired.

There are many ways to express a permutation in python.  
For these experiments, we chose to render a permutation as a list of integers between $1$ and $n-1$ inclusive; this list is interpreted by the evaluator as a product of simple reflections $s_i$.
In particular, the identity is expressed as the empty list, so our initial program simply outputs two copies of the empty list. 

One might ask how we constrain \texttt{priority} to give output of this form as it evolves under FunSearch.  
The answer is, we can't!
Not directly, at least.  
What the user controls directly is the frozen evaluator, which we can design to give a low score to any \texttt{priority} whose output isn't in the desired form.
In practice (in this experiment and many others) we have found that this practice is sufficient to keep most of the programs in the population producing well-formed output.

Our experiments used the prompt
{\small
\begin{lstlisting}[breaklines=true]
The program is trying to learn a pair of permutations with large d-invariant.

On every iteration, improve priority_v1 over the priority_vX methods from previous iterations.
Make only small changes.
Try to make the code short.
Please do *not* use any randomness.
\end{lstlisting}
}

The request not to use randomness is because when a program with non-deterministic output achieves a high score on a test run, we may not be able to reproduce that performance on later runs.  It's simpler to keep track of what's going on if a program that pleases the evaluator is actually deterministically good.

Our runs primarily used Mistral's model \texttt{mistral-small-3.1-24b-instruct}, which was chosen for cheapness.  
In \cite{generative} we found that, in FunSearch applications, there was no consistent performance benefit to using more expensive models.  
In the present project we occasionally ran FunSearch with input from larger models, but anecdotally observed no improvement in performance.  
The total cost of the FunSearch experiments reported here was roughly 10USD in API calls to Mistral.  
By contrast, \cite{AE} reports that AlphaEvolve did exhibit a benefit when supplied with calls to a more expensive LLM.

In general, programs trained by FunSearch for a fixed instance or a fixed set of instances of a problem tend not generalize well to further instances.
For instance, a program trained to study the No-Isosceles Problem for a  $9 \times 9$ grid does not perform very well when run on a larger grid~\cite{generative}. 
This problem is quite different. 
FunSearch experiments rather quickly (within a half hour) arrived at programs which gave $d_{x,y} = 2n - c$ for many $n$, even when trained on only one $n$.  For example, a run for $n=11$ yielded the following program, which produces a $d$-invariant of $17$.

{\small
\begin{lstlisting}[language=Python, breaklines=true]
def priority(n: int) -> list:
 #Returns a pair of lists of integers between 1 and n-1, to be interpreted as a sequence of adjacent transpositions.
 # n is an int.  We do not constrain the length of the lists.

    a = list(range(1, n-1, 2))[::-1] + list(range(2, n-1, 2))[::-1]
    b = list(range(0, n-2, 2)) + list(range(n//3, n-1, 2))[::-1]
    c = list(range(1, n-1, 3))[::-1] + list(range(2, n-1, 3))[::-1] + list(range(3, n-1, 3))[::-1]
    return [a, b, c]
\end{lstlisting}
}

One amusing aspect is that this program actually produces a list of {\em three} permutations, rather than two. 
It turns out that the evaluator as we wrote it simply discards any extra permutations passed to it as input. 
It is quite common in both FunSearch and AlphaEvolve that LLM-generated programs contain code irrelevant to the task.

The program above produces a pair of permutations with d-invariant $11m+6$ for every $n$ of the form $6m+5$ that we tested, and we imagine it likely does so in general.
This is already unusually good performance; one rarely sees programs that generalize to infinitely many problem instances.

On the other hand, when $n=13$ the program gives a pair of permutations with $d$-invariant $13$, a fairly poor performance.  But a FunSearch run {\em initialized} at the above program very rapidly found a program which yielded $d$-invariant $21$; the only change is that the definition of the second permutation changes to
{\small
\begin{lstlisting}[language=Python, breaklines=true]
b = list(range(0, n-1, 2)) + list(range(n//4, n-1, 2))[::-1]
\end{lstlisting}
}

This suggested manually checking which value of the beginning of the second range (\texttt{n // 3} in the first program trained on $n=11$, \texttt{n // 4} in the second trained on $n=13$) gives optimal performance. It turns out that in practice it seems best always to start the range at $3$. 
This manually modified program yields a pair of permutations with $d$-invariant $2n-5$ for all $n$.\footnote{Or so the data suggests; since this turned out not to be optimal, we did not write down a proof that this works for all $n$.}
This performance was equal to that of the best pair of permutations we came up with for odd $n$, and worse by $1$ for even $n$.  
The permutations generated by this program were not identical to the ones we knew, but were in some sense ``in the same spirit" -- in particular, the number of involutions in both sets of  permutations is linear in $n$, meaning that we are very close to the bottom of the Bruhat order.

\subsection{AlphaEvolve experiments}

We then moved on to AlphaEvolve, another evolutionary protocol designed by DeepMind as a successor to FunSearch.  AlphaEvolve is expected to be released as an open-source package in the near future, but for the moment we ran our trials internally at DeepMind.  AlphaEvolve has already been observed to do well at example-finding in a wide variety of problems in both discrete and continuous areas of mathematics, and in some cases to find examples for a single problem instance that generalize to an infinite family of instances~\cite{AE}.  Customary practice in AlphaEvolve runs is to use a less terse, more directive prompt.  (From \cite{AE}:  ``Giving AlphaEvolve an insightful piece of expert advice in the
prompt almost always led to significantly better results.")  Here's the prompt we used:

The prompt begins

{\small
\begin{lstlisting}[breaklines=true]
    Maximizing the d-invariant of Kazhdan-Lusztig polynomials

Act as an expert in computational algebra and combinatorial optimization. Your task is to find pairs of permutations `x` and `y` in the symmetric group S_n that have the largest possible "d-invariant", denoted d(x,y). This invariant is a coefficient of a Kazhdan-Lusztig polynomial.

You need to produce a search function that, given an integer `n`, finds the best pair of permutations of size `n`. A permutation should be represented as a tuple of integers from 1 to `n`. For example, for n=3, a valid pair could be `((1, 3, 2), (3, 2, 1))`.

\end{lstlisting}
}

and ends

{\small
\begin{lstlisting}[breaklines=true]

**Hint:** It is known that the maximum value of d(x,y) for large `n` is at least `2n-5` (for odd `n`) and `2n-4` (for even `n`). Try to find pairs that meet or exceed this bound!

**Second Hint**: While the above prompt tells you to write a search function, what you really want is a general solution. Your program will be tested on some pretty large values of n, where search won't help you. Instead, while in the short term a search might help you, in the long term the only way you can solve this problem is by finding a general construction. This general construction can of course still contain a tiny bit of search in it, but it should contain some good structure based on ideas and understanding, it should NOT be just a general purpose search heuristic. We are here for the long term, so this is what you should focus on. Research maths is a marathon, not a sprint. Now go and find that general construction that solves the problem for all n!

You got this! I believe in you!!!
\end{lstlisting}
}

Rather than training on a single $n$, we evaluated programs by computing the $d$-invariant of their output for all $n$ from $10$ to $50$ and averaging the results.  The idea is to incentivize evolution to produce programs that work for many or even all $n$, rather than being overfitted to the particular $n$ trained on.  This is the approach called ``generalizer mode" in \cite{AE}.  It's worth noting that the ``generalizer" mode of evaluation has been tried in many FunSearch experiment, with quite weak results; one apparent advantage of AlphaEvolve is it seems better able to aggregate information from combined evaluation on many problem instances.

AlphaEvolve ran for several hours, and produced programs which gave $d$-invariants exceeding $2n-5$ for most $n$; to the eye, the performance of these programs still seemed to be linear in $n$ with constant $2$.  Letting AlphaEvolve run overnight, though, resulted in the qualitative improvement we report on in the main body of this paper.  The resulting program produced $d$-invariants which appeared to be (and quickly proved to be) superlinear in $n$.  The program, its output, and the full prompt used to start the evolution process can be found at \url{https://github.com/JSEllenberg/Bruhat-hypercubes/tree/main}.

This run evaluated around 3000-5000 programs; its model calls were to a combination of Gemini 2.5 Flash and Gemini 2.5 Pro, and solicited from those models a number of tokens that would have cost about 70USD at public rates.\footnote{Of course these dollar amounts are quite unstable; we merely want to make the point that the kind of experiments we're talking about don't require computational resources that are infeasible for individual researchers.}

The evolved program is rather long, and contains substantially more non-functional code than the FunSearch examples above.  (Note that the instruction to "keep the code short" is no longer present in the prompt.)  There is a ``simulated annealing" routine which we believe is almost never actually executed, and a search over a large class of recursive constructions which in fact always ends up using one particular recursive construction.  

Denoting by $\alpha(n)$ the $d$-invariant of the permutations in $S_n$ outputted by the program, we observed what appears to be the relation $\alpha(2n) = 2\alpha(n) + n$, and in particular, that $\alpha(2^m) = m 2^{m-1}$ in the range of values tested.

In fact, after inspection of the code we were able to come up with a simple description of what the program (or rather, the small part of the program that was actually doing the work) was doing when $n = 2^m$, and this is what gave rise to the definition of $x_m$ and $y_m$ appearing in the main body of the paper.  In fact, $x_3$ and $y_3$ were the same permutations in $S_8$ we had already found in $S_8$. So in an alternate universe, we might have noticed that these two permutations on $8$ letters had this simple description in terms of length-$3$ binary expansion.  But we did not in fact notice this until alerted to it by the presence of a program that computed this permutation for binary expansions of any length.

As discussed in section~\ref{ss:lowdisc}, these permutations of $2^m$ letters are already known in the world of low-discrepancy sequences.  We searched the logs for relevant keywords to see whether AlphaEvolve was drawing these permutations from the existing literature, but found no references to low-discrepancy sequences.  We did find some references to bit-reversal permutations, and even programs which computed $x_m$ and $y_m$ for $n = 2^m$; but these programs did not end up in the lineage that led to the final successful program!  That's because these particular programs performed very poorly on values of $n$ which were {\em not} powers of $2$.  This scenario illustrates the importance of thinking about the scoring function.  If $s(n)$ is the $d$-invariant of the two permutations a program outputs in $S_n$, our scoring function was $\sum_{n=10}^{50} s(n)$.  But since the maximal $d$-invariant in $S_n$ is monotone in $n$, we could instead have used this knowledge and evaluated our function by $\sum_{n=10}^{50} \max_{i \leq n} s(i)$.  This would have assigned a high score to the first functions to try the bit-reversal permutations for $n=2^m.$  In this case, it didn't matter, since AlphaEvolve eventually arrived at the bit-reversal permutations again, and this time with a program that performed adequately for all $n$ in the range.  

We had some prior intuition that pairs of permutations $x,y$ with large $d$-invariant would have something to do with large hypercubes in the Bruhat order.  Thus, it was very appealing and not completely surprising when we noticed that, for $m=3$ and $m=4$, the Kazhdan-Lusztig R-polynomial is a power of $(q-1)$, which is consistent with the Bruhat interval $[x_m, y_m]$ being a hypercube (that is, equivalent as a directed graph to $(\Z/2\Z)^d$ with edges corresponding to changing a single coordinate from $0$ to $1$.)  It thus seemed likely that this interval was in fact a hypercube of dimension $m 2^{m-1}$.  We expected that this might be difficult to prove, since in general it is challenging even to compute the cardinality of the Bruhat interval between two permutations, let alone determine its structure.  In this case, however, as we explain in the main body of the paper, we were able to find an explicit description of the permutations lying in the Bruhat interval $[x_m, y_m]$ which made everything tractable.

It is probably not a coincidence that the three desirable phenomena
\begin{itemize}
\item Qualitative, not just incremental improvement over previous best known optimum for the combinatorial problem;
\item Highly interpretable program;
\item Output which can be proven to give good examples for all $n$, not just those in the training range, and which seems to be of interest for reasons beyond its good score on the trained-for objective;
\end{itemize}

all occur together in this case. 

It's interesting that we found a very large hypercube while running a search optimized for a different objective, maximizing the $d$-invariant.  It is interesting to wonder whether AlphaEvolve would have done as well if asked directly to find a Bruhat interval that was a large hypercube, rather than a large $d$-invariant.

\bibliographystyle{plain}
\bibliography{main.bib}

\end{document}